\documentclass{article}

\usepackage{amsmath,amssymb,amsthm}
\usepackage[english]{babel}
\usepackage{enumitem}
\usepackage[top=2.5cm,bottom=2.5cm,left=2.5cm,right=2.5cm]{geometry}
\usepackage{mathrsfs}
\usepackage{tikz}
\usepackage{url}
\usepackage{xcolor}

\makeatletter
\makeatother

\newtheorem{thm}{Theorem}[section]

\newtheorem{lm}[thm]{Lemma}
\newtheorem{res}[thm]{Result}
\newtheorem{crl}[thm]{Corollary}

\theoremstyle{definition}

\newtheorem{df}[thm]{Definition}

\newcommand{\NN}{\mathbb{N}}
\newcommand{\NNnot}{\NN\setminus\{0\}}
\newcommand{\FF}{\mathbb{F}}

\newcommand{\pg}{\textnormal{PG}}
\newcommand{\ag}{\textnormal{AG}}
\newcommand{\vspan}[1]{\left \langle #1 \right \rangle}

\newcommand{\FieldRed}{\mathcal{F}}
\newcommand{\Regulus}{\mathcal{R}}
\newcommand{\TransversalSpace}{\mathcal{T}}
\newcommand{\Sub}{\mathcal{B}}
\newcommand{\HyperSub}{\mathcal{C}}
\newcommand{\AffineHyperSub}{\mathcal{D}}
\newcommand{\ppointsX}{\mathcal{P}_X}
\newcommand{\llinesX}{\mathcal{L}_X}
\newcommand{\ppointsY}{\mathcal{P}_Y}
\newcommand{\llinesY}{\mathcal{L}_Y}

\renewcommand{\geq}{\geqslant}
\renewcommand{\leq}{\leqslant}
\renewcommand{\rho}{\varrho}
\renewcommand{\dim}[1]{\textnormal{dim}\left(#1\right)}

\setlength{\parindent}{0pt}

\title{Two disguises of the linear representation of a subgeometry}
\author{Lins Denaux \\ {\it Ghent University}}
\date{}

\begin{document}

\maketitle

\begin{abstract}
    Let $\pg(n,q)$ be the Desarguesian projective space of dimension $n$ over the finite field of order $q$.
    The \emph{linear representation} of a point set $\mathcal{K}$ in a hyperplane at infinity of $\pg(n,q)$ is the point-line geometry consisting of the affine points of $\pg(n,q)$, together with the union of the parallel classes of affine lines corresponding to the points of $\mathcal{K}$.
    This type of point-line geometry has been widely investigated in the literature.
    Curiously, if $\mathcal{K}$ is a subgeometry, two disguises of its linear representation occur in two separate works.
    In this short note, we give an explicit isomorphism between these two disguises by making use of field reduction.
\end{abstract}

{\it Keywords:} Affine spaces, Field reduction, Isomorphic, Linear representations, Point-line geometries, Projective spaces.

{\it Mathematics Subject Classification:} $05$B$25$, $51$E$20$.

\section{Introduction}\label{Sect_Introduction}

    The concept of a \emph{linear representation} was independently introduced for hyperovals by Ahrens and Szekeres \cite{AhrensSzekeres} and Hall \cite{Hall}, and extended to general point sets by De Clerck \cite{DeClerck}.
    
    \begin{df}\label{Def_LinearRep}
	    Let $s\in\NN$ and consider a point set $\mathcal{K}$ contained in a hyperplane $H_\infty$ of $\pg(s+1,q)$.
	    The \emph{linear representation} of $\mathcal{K}$ is the point-line geometry $T^*(\mathcal{K}):=(\mathscr{P}_\mathcal{K},\mathscr{L}_\mathcal{K})$ with natural incidence, where
	    \begin{itemize}
	        \item $\mathscr{P}_\mathcal{K}$ is the set of points of $\pg(s+1,q)\setminus H_\infty$, and
	        \item $\mathscr{L}_\mathcal{K}$ is the set of all point sets $\ell\setminus\{P\}$, where $\ell\nsubseteq H_\infty$ is a line of $\pg(s+1,q)$ through a point $P\in\mathcal{K}$.
	    \end{itemize}
	\end{df}
	
	Note that, essentially, the linear representation of $\mathcal{K}$ is a point-line geometry consisting of the affine points of $\pg(s+1,q)$, together with a union of parallel classes of its affine lines; the set $\mathcal{K}$ determines which parallel classes are considered.
	
	Linear representations are mainly investigated in case of $\mathcal{K}$ being a well-known object, such as a hyperoval (if $s=2$ and $q$ is even, see \cite{BicharaMazzoccaSomma,GrundhoferJoswigStroppel}), a Buekenhout-Metz unital (if $s=2$ and $q$ is square, see \cite{DeWinter}) or a subgeometry (see \cite{DeClerck,DeWinterRotteyVandeVoorde,Debroey}); results on general linear representations were proven as well, see e.g.\ \cite{BaderLunardon,CaraRotteyVandeVoorde,DeWinterRotteyVandeVoorde}.
	The introduction of \cite{DeWinterRotteyVandeVoorde} presents a nice overview of these known results.
    
    \bigskip
    This note focuses on the case of $\mathcal{K}$ being a \emph{subgeometry} (see Definition \ref{Def_Subgeometry}).
    As a matter of fact, if $\mathcal{K}\cong\pg(s,q)$, the point-line geometry $T^*(\mathcal{K})$ embedded in $\pg\big(s+1,q^t\big)$, $t\in\NNnot$, occurs somewhat \emph{disguised} in two separate works.
    On the one hand, the authors of \cite{DeWinterRotteyVandeVoorde} introduce the point-line geometry $X(s,t,q)$, its `points' being $(t-1)$-dimensional subspaces of $\pg(s+t,q)$ disjoint to a fixed $s$-space and its `lines' being $t$-dimensional subspaces intersecting that same $s$-space in a point (see Definition \ref{Def_PointLineX}).
    On the other hand, the author of \cite{Denaux} introduces the point-line geometry $Y(s,t,q)$, its `points' being the affine points of $\pg\big(t,q^{s+1}\big)$ and its `lines' being the affine parts of $\FF_q$-subgeometries of maximal dimension that share a fixed subgeometry of maximal dimension within the hyperplane at infinity (see Definition \ref{Def_PointLineY}).
    In both works, the authors prove that their newly introduced point-line geometry is in fact isomorphic to the linear representation $T^*(\mathcal{K})$ embedded in $\pg\big(s+1,q^t\big)$, with $\mathcal{K}\cong\pg(s,q)$; explicit isomorphisms are provided in both cases.
    
    This observation triggers the question whether an explicit, direct isomorphism between the point-line geometries $X(s,t,q)$ and $Y(s,t,q)$ can be obtained as well.
    The answer turns out to be positive, and hence the aim of this note is to describe and prove this explicit isomorphism.
    The isomorphism in question is constructed by making use of the technique called \emph{field reduction}.
	
	\section{Preliminaries}
	
	Throughout this note, we assume $q$ to be an arbitrary prime power.
	The Galois field of order $q$ will be denoted by $\FF_q$ and the Desarguesian projective space of (projective) dimension $n\in\NN$ over $\FF_q$ will be denoted by $\pg(n,q)$.
	
	\begin{df}\label{Def_Subgeometry}
		Let $s\in\NN$ and $t\in\{0,1,\dots,n\}$.
		A \emph{$t$-dimensional} \emph{$\FF_q$-subgeometry} $\Sub$ of $\pg\big(n,q^{s+1}\big)$ is a set of subspaces (points, lines, \ldots, $(t-1)$-dimensional subspaces) of $\pg\big(n,q^{s+1}\big)$, together with the incidence relation inherited from $\pg\big(n,q^{s+1}\big)$, such that $\Sub$ is isomorphic to $\pg(t,q)$.
		
		If $t=1$ or $t=2$, we will often call $\Sub$ an \emph{$\FF_q$-subline} or an \emph{$\FF_q$-subplane} of $\pg(n,q)$, respectively.
	\end{df}
	
	We now present the definitions of the two relevant point-line geometries $X(s,t,q)$ and $Y(s,t,q)$ (see Section \ref{Sect_Introduction}).
	
	\begin{df}[{\cite[Section $3$]{DeWinterRotteyVandeVoorde}}]\label{Def_PointLineX}
	    Let $s\in\NN$ and $t\in\NNnot$.
	    Consider an $s$-dimensional subspace $\pi$ of $\pg(s+t,q)$.
	    The point-line geometry $X(s,t,q)$ is the incidence structure $(\ppointsX,\llinesX)$ with natural incidence, where
	    \begin{itemize}
	        \item $\ppointsX$ is the set of all $(t-1)$-spaces of $\pg(s+t,q)$ disjoint to $\pi$, and
	        \item $\llinesX$ is the set of all $t$-spaces of $\pg(s+t,q)$ meeting $\pi$ exactly in one point.
	    \end{itemize}
	\end{df}
	
	\begin{df}[{\cite[Definition $6.1.1$]{Denaux}}]\label{Def_PointLineY}
	    Let $s\in\NN$ and $t\in\NNnot$.
	    Consider a hyperplane $\Sigma$ of $\pg\big(t,q^{s+1}\big)$ containing a $(t-1)$-dimensional $\FF_q$-subgeometry $\HyperSub$.
	    The point-line geometry $Y(s,t,q)$ is the incidence structure $(\ppointsY,\llinesY)$ with natural incidence, where
	    \begin{itemize}
	        \item $\ppointsY$ is the set of points of $\pg\big(t,q^{s+1}\big)\setminus\Sigma$, and
	        \item $\llinesY$ is the set of all point sets $\Sub\setminus\HyperSub$, where $\Sub$ is a $t$-dimensional $\FF_q$-subgeometry of $\pg\big(t,q^{s+1}\big)$ that contains $\HyperSub$.
	    \end{itemize}
	\end{df}
	
	As described in Section \ref{Sect_Introduction}, the literature provides us with the following two results.
	
	\begin{res}[{\cite[Theorem $4.1$]{DeWinterRotteyVandeVoorde}}]
	    Let $s\in\NN$ and $t\in\NNnot$.
	    Let $\AffineHyperSub_{s,t,q}$ be an $s$-dimensional $\FF_q$-subgeometry of $\pg\big(s,q^t\big)$.
	    Then the point-line geometries $X(s,t,q)$ and $T^*(\AffineHyperSub_{s,t,q})$ are isomorphic.
	\end{res}
	
	\begin{res}[{\cite[Theorem $6.2.4$]{Denaux}}]
	    Let $s\in\NN$ and $t\in\NNnot$.
	    Let $\AffineHyperSub_{s,t,q}$ be an $s$-dimensional $\FF_q$-subgeometry of $\pg\big(s,q^t\big)$.
	    Then the point-line geometries $Y(s,t,q)$ and $T^*(\AffineHyperSub_{s,t,q})$ are isomorphic.
	\end{res}
	
	Hence, we obtain the following.
	
	\begin{crl}
	    Let $s\in\NN$ and $t\in\NNnot$.
	    Then the point-line geometries $X(s,t,q)$ and $Y(s,t,q)$ are isomorphic.
	\end{crl}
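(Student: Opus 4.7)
The statement follows formally from the two preceding results: composing the isomorphism $X(s,t,q)\cong T^*(\AffineHyperSub_{s,t,q})$ with the inverse of $Y(s,t,q)\cong T^*(\AffineHyperSub_{s,t,q})$ produces an isomorphism $X(s,t,q)\cong Y(s,t,q)$. This short argument is indirect, however: it factors through the linear representation $T^*$ and does not describe the resulting map $X\to Y$ explicitly. As announced in Section~\ref{Sect_Introduction}, the aim of this note is to make the isomorphism direct, using field reduction, so the plan below concentrates on that.

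The first step is to translate the defining data of $Y(s,t,q)$ into a field-reduced picture. Applying field reduction to the ambient $\pg(t,q^{s+1})$ of $Y(s,t,q)$, each of its points is identified with an element of the Desarguesian $s$-spread $\mathcal{D}$ of $\pg\bigl((t+1)(s+1)-1,q\bigr)$. Under this identification the hyperplane $\Sigma$ becomes a subspace $\Sigma^\sharp$ of codimension $s+1$, the $(t-1)$-dim $\FF_q$-subgeometry $\HyperSub\subset\Sigma$ becomes a $(t-1)$-dim transversal $\HyperSub^\sharp\subset\Sigma^\sharp$ to $\mathcal{D}$, and every $t$-dim $\FF_q$-subgeometry $\Sub\supset\HyperSub$ becomes a $t$-dim transversal $\Sub^\sharp\supset\HyperSub^\sharp$ to $\mathcal{D}$. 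A point of $Y(s,t,q)$ lies on a line exactly when the corresponding affine spread element is met by the corresponding transversal.

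The second step is to match this field-reduced picture with the defining configuration of $X(s,t,q)$: a $\pg(s+t,q)$, a distinguished $s$-subspace $\pi$, a bijection between the affine elements of $\mathcal{D}$ and the $(t-1)$-subspaces of $\pg(s+t,q)$ disjoint from $\pi$, and a compatible bijection between the transversals $\Sub^\sharp\supset\HyperSub^\sharp$ and the $t$-subspaces of $\pg(s+t,q)$ meeting $\pi$ in a single point. A natural candidate for this $\pg(s+t,q)$ is the quotient of $\pg\bigl((t+1)(s+1)-1,q\bigr)$ by a complement of $\HyperSub^\sharp$ inside $\Sigma^\sharp$, chosen canonically from the data $(\mathcal{D},\Sigma^\sharp,\HyperSub^\sharp)$.

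The main obstacle is the dimension jump: field reduction enlarges the ambient from $\pg(t,q^{s+1})$ to $\pg(ts+s+t,q)$, whose projective dimension is strictly larger than $s+t$ as soon as $s,t\geq 1$. The desired correspondence therefore cannot be a projective embedding, and the heart of the argument lies in specifying a quotient that intrinsically collapses the $ts-1$ ``extra'' projective dimensions of $\Sigma^\sharp$ sitting outside $\HyperSub^\sharp$, and then verifying that affine spread elements descend to $(t-1)$-subspaces disjoint from $\pi$ while transversals through $\HyperSub^\sharp$ descend to $t$-subspaces meeting $\pi$ in a point. Once this quotient is in place, bijectivity and incidence preservation should reduce to the standard correspondence between $t$-dim $\FF_q$-subgeometries and transversal $t$-subspaces of a Desarguesian $s$-spread, combined with dimension bookkeeping.
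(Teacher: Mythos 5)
Your first paragraph is exactly the paper's proof of this corollary: it is stated as an immediate consequence of the two preceding Results, obtained by composing the isomorphism $X(s,t,q)\cong T^*(\AffineHyperSub_{s,t,q})$ with the inverse of $Y(s,t,q)\cong T^*(\AffineHyperSub_{s,t,q})$, so your argument is correct and takes the same approach. The remaining sketch concerns the explicit direct isomorphism (the paper's main theorem, which it builds via a transversal $(st-1)$-space and a duality rather than a quotient), but that goes beyond the statement at hand and is not needed here.
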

	
	This work focuses on an explicit isomorphism between $X(s,t,q)$ and $Y(s,t,q)$ by making use of a technique called \emph{field reduction}.
	
	\subsection*{Field reduction}
	
	Let $s,t\in\NN$.
	The idea behind field reduction is interpreting a projective geometry $\pg\big(t,q^{s+1}\big)$ as its underlying vector space $V\big(t+1,q^{s+1}\big)$, which is known to be isomorphic to $V(st+s+t+1,q)$, which in turn naturally translates to $\pg(st+s+t,q)$.
	In this way, one obtains a correspondence between subspaces of $\pg\big(t,q^{s+1}\big)$ and subspaces of $\pg(st+s+t,q)$ by `reducing' the underlying field.
	A great survey on this topic can be found in \cite{LavrauwVandeVoorde}.
	The authors of this work formally introduce the \emph{field reduction map}
	\begin{equation}\label{Eq_FieldReduction}
	    \FieldRed_{t+1,s+1,q}:\pg\big(t,q^{s+1}\big)\rightarrow\pg(st+s+t,q)\textnormal{,}
	\end{equation}
	which maps subspaces onto subspaces by viewing these as embedded projective geometries and applying field reduction.
	In favor of simplicity, we won't go into detail about the mechanics behind this map, but will profit from its many properties.
	We list the ones we need for this note.
	
	\begin{lm}[{\cite[Lemma $2.2$]{LavrauwVandeVoorde}}]\label{Lm_FieldRedProps}
	    Let $\mathcal{P}$ denote the set of points of $\pg\big(t,q^{s+1}\big)$ and consider $\FieldRed_{t+1,s+1,q}$ as defined in \eqref{Eq_FieldReduction}.
	    \begin{enumerate}
	        \item The field reduction map $\FieldRed_{t+1,s+1,q}$ is injective and preserves inclusion and disjointness of subspaces.
	        \item If $\kappa$ is a $k$-dimensional subspace of $\pg\big(t,q^{s+1}\big)$, $k\in\{-1,0,\dots,t\}$, then $\FieldRed_{t+1,s+1,q}(\kappa)$ has dimension $sk+s+k$.
	    \end{enumerate}
	\end{lm}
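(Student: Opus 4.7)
The plan is to unwind the definition of $\FieldRed_{t+1,s+1,q}$ at the vector-space level and to verify each claim directly there. I would start by fixing a basis of $\FF_{q^{s+1}}$ over $\FF_q$; this produces an $\FF_q$-vector-space isomorphism
\[
V\big(t+1,q^{s+1}\big)\;\cong\;V\big((t+1)(s+1),q\big)=V(st+s+t+1,q)\textnormal{.}
\]
By construction, $\FieldRed_{t+1,s+1,q}$ sends a projective subspace $\kappa$ of $\pg\big(t,q^{s+1}\big)$, corresponding to an $\FF_{q^{s+1}}$-subspace $W\subseteq V\big(t+1,q^{s+1}\big)$, to the projective subspace of $\pg(st+s+t,q)$ determined by $W$ regarded merely as an $\FF_q$-subspace.

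Item (2) is then a dimension count: if $\kappa$ has projective dimension $k\in\{-1,0,\dots,t\}$, then $W$ has $\FF_{q^{s+1}}$-dimension $k+1$ and therefore $\FF_q$-dimension $(k+1)(s+1)$, which yields projective dimension $(k+1)(s+1)-1=sk+s+k$ for $\FieldRed_{t+1,s+1,q}(\kappa)$; the extremal case $k=-1$ is handled by the usual empty-subspace convention.

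For item (1), injectivity reduces to the observation that an $\FF_{q^{s+1}}$-subspace is completely determined by its underlying set of vectors, so distinct projective subspaces of $\pg\big(t,q^{s+1}\big)$ yield distinct point sets in $\pg(st+s+t,q)$. Preservation of inclusion is immediate, since inclusion of $\FF_{q^{s+1}}$-subspaces coincides with inclusion of the associated underlying sets. Preservation of disjointness translates into the assertion that two $\FF_{q^{s+1}}$-subspaces meet only in the zero vector if and only if the same holds over $\FF_q$, which is a tautology.

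The only real subtlety, and what I would flag as the main obstacle, is to precisely formalise how the field reduction map acts on subspaces (as opposed to simply on points); once one records that $\FieldRed_{t+1,s+1,q}$ is induced by $\FF_q$-restriction of scalars on the ambient vector space, all three listed properties become essentially tautological.
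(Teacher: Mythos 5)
Your proposal is correct and matches the paper's treatment: the paper gives no separate argument, simply remarking that these properties follow immediately from the definition of $\FieldRed_{t+1,s+1,q}$ (citing the survey of Lavrauw and Van de Voorde), and your write-up is exactly that definitional unwinding via restriction of scalars, with the dimension count $(k+1)(s+1)-1=sk+s+k$ and the tautological set-theoretic verifications of injectivity, inclusion and disjointness.
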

	
	Note that we have altered, omitted and generalised some properties stated in the original lemma (\cite[Lemma $2.2$]{LavrauwVandeVoorde}), but all the properties stated above follow immediately from the definition of $\FieldRed_{t+1,s+1,q}$.
	
	\bigskip
	The authors of \cite{LavrauwVandeVoorde} also prove that the image set of the set of points of a $k$-dimensional $\FF_q$-subgeometry of $\pg\big(t,q^{s+1}\big)$ under $\FieldRed_{t+1,s+1,q}$ is projectively equivalent to the system of $s$-spaces of a so-called \emph{Segre variety} $\mathcal{S}_{k,s}$ of the Segre variety $\mathcal{S}_{t,s}$.
	
	\begin{df}\label{Def_Regulus}
	    Let $s\in\NN$.
	    A \emph{regulus} $\Regulus$ of $\pg(2s+1,q)$ is a set of $q+1$ pairwise disjoint $s$-spaces, with the property that any line meeting three elements of $\Regulus$, intersects all elements of $\Regulus$.
	    Such a line will be called a \emph{transversal line} of $\Regulus$.
	\end{df}
	
	As a regulus is actually one of the systems of a Segre variety $\mathcal{S}_{1,s}$, we obtain the following.
	
	\begin{lm}[{\cite[Theorem $2.6$]{LavrauwVandeVoorde}}]\label{Lm_SublineIsRegulus}
	    The image set of the set of points on an $\FF_q$-subline of $\pg\big(1,q^{s+1}\big)$ under $\FieldRed_{2,s+1,q}$ is a regulus of $\pg(2s+1,q)$.
	\end{lm}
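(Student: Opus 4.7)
My plan is to verify the two defining conditions of a regulus (Definition~\ref{Def_Regulus}) separately. By Lemma~\ref{Lm_FieldRedProps}, each point of an $\FF_q$-subline $\Sub$ of $\pg(1, q^{s+1})$ maps under $\FieldRed_{2,s+1,q}$ to an $s$-dimensional subspace of $\pg(2s+1, q)$, and distinct points (being disjoint) map to disjoint subspaces. So the image already consists of $q+1$ pairwise disjoint $s$-spaces, settling the first condition.

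For the transversal property, I would first reduce to a canonical case. Any $\FF_q$-subline is the image of the standard subline $\pg(1, q) \hookrightarrow \pg(1, q^{s+1})$ under some $\phi \in \mathrm{PGL}(2, q^{s+1})$; such a $\phi$ is induced by an $\FF_{q^{s+1}}$-linear (hence $\FF_q$-linear) map on $V(2, q^{s+1}) \cong V(2s+2, q)$, giving a projectivity of $\pg(2s+1, q)$ that commutes with $\FieldRed_{2,s+1,q}$. Since projectivities preserve reguli, we may assume $\Sub$ is the standard subline. Moreover, $\mathrm{PGL}(2, q)$ acts $3$-transitively on $\Sub$, and the same lifting argument turns this into a $3$-transitive action on the $q+1$ image $s$-spaces. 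Hence it suffices to check the transversal condition for three specific image $s$-spaces.

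I would make the check explicit: with an $\FF_q$-basis $\{e_0, \ldots, e_s\}$ of $\FF_{q^{s+1}}$, the point $\langle (1, \lambda) \rangle$ (for $\lambda \in \FF_q$) maps to $\langle (e_i, \lambda e_i) : 0 \leq i \leq s \rangle_q$, and $\langle (0, 1) \rangle$ maps to $\langle (0, e_i) \rangle_q$. A line in $\pg(2s+1, q)$ meeting the images of $\langle (1, 0) \rangle$ and $\langle (0, 1) \rangle$ has the form $\langle (\mu, 0), (0, \nu) \rangle_q$ for some $\mu, \nu \in \FF_{q^{s+1}}^*$; the requirement that it also meet the image of $\langle (1, 1) \rangle$ forces $\nu / \mu \in \FF_q^*$, after which a quick calculation shows it then meets every $\FieldRed_{2,s+1,q}(\langle (1, \lambda) \rangle)$. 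The main obstacle I anticipate is notational bookkeeping: keeping the three conceptual levels (subspaces of $\pg(1, q^{s+1})$, $\FF_q$-subspaces of the underlying $V(2s+2, q)$, and subspaces of $\pg(2s+1, q)$) cleanly distinct, and verifying that collineations genuinely lift under field reduction in the way claimed.
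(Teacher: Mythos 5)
Your argument is correct, but it takes a genuinely different route from the paper: the paper does not prove Lemma \ref{Lm_SublineIsRegulus} at all, it quotes it from Lavrauw--Van de Voorde, the justification being that the field-reduced image of a $k$-dimensional $\FF_q$-subgeometry is (projectively equivalent to) a system of $s$-spaces of a Segre variety $\mathcal{S}_{k,s}$, and for $k=1$ such a system is precisely a regulus. Your proof replaces this with a self-contained elementary verification: disjointness and the count $q+1$ come from Lemma \ref{Lm_FieldRedProps}; transitivity of $\textnormal{PGL}\big(2,q^{s+1}\big)$ on $\FF_q$-sublines plus the fact that an $\FF_{q^{s+1}}$-semilinear-free, $\FF_{q^{s+1}}$-linear map is in particular $\FF_q$-linear (so the induced projectivity of $\pg(2s+1,q)$ intertwines with $\FieldRed_{2,s+1,q}$ and preserves reguli) reduces to the canonical subline; $3$-transitivity of the lifted $\textnormal{PGL}(2,q)$ on the $q+1$ image $s$-spaces reduces the transversal condition to one coordinate computation, and that computation (a line $\vspan{(\mu,0),(0,\nu)}_q$ meets the image of $\vspan{(1,1)}$ iff $\nu/\mu\in\FF_q^*$, in which case it meets every $\FieldRed_{2,s+1,q}(\vspan{(1,\lambda)})$, $\lambda\in\FF_q$) checks out. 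What your route buys is independence from the Segre-variety machinery and an explicit picture of the transversal lines; what the paper's citation buys is the far more general statement for $k$-dimensional subgeometries inside $\mathcal{S}_{t,s}$, which is exactly the generality exploited later in Lemma \ref{Lm_ExistsMaximallyIntersectingInGeneralisedRegulus}. One point to make explicit in a final write-up: your reduction to the canonical subline uses the standard definition of an $\FF_q$-subline as a $\textnormal{PGL}\big(2,q^{s+1}\big)$-image of $\pg(1,q)$ (equivalently, the unique subline through three given points); the paper's Definition \ref{Def_Subgeometry} is phrased loosely enough that you should state you are using this standard characterisation.
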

	
	\section{Reguli and transversal lines}
	
	This section aims to prove Lemma \ref{Lm_ExistsMaximallyIntersectingInGeneralisedRegulus}, which will turn out to be crucial to prove the main theorem (Theorem \ref{Thm_Main}).
	The proof of Lemma \ref{Lm_ExistsMaximallyIntersectingInGeneralisedRegulus} will be done with the help of some elementary lemmas of geometric nature.
	
	\begin{lm}\label{Lm_ThreeDisjointIntersectLine}
	    Let $s\in\NN$.
	    Consider three pairwise disjoint $s$-spaces $\sigma_1$, $\sigma_2$ and $\sigma_3$ of $\pg(2s+1,q)$ and suppose that $S$ is a point contained in $\sigma_1$.
	    Then there exists a unique line through $S$ intersecting each $\sigma_i$ exactly in a point.
	\end{lm}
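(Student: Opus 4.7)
\medskip\noindent\textbf{Proof plan.} My approach will be to exploit the span $\tau:=\vspan{S,\sigma_2}$, which has projective dimension $s+1$ because $S\in\sigma_1$ and $\sigma_1\cap\sigma_2=\varnothing$ forces $S\notin\sigma_2$. The key observation is that $\sigma_2$ is a hyperplane of $\tau$, so any line of $\tau$ not contained in $\sigma_2$ meets $\sigma_2$ in a unique point. Thus the existence and uniqueness of the desired line can both be reduced to showing that the set $\tau\cap\sigma_3$ consists of a single point.

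\medskip\noindent
First, I would establish $\dim{\tau\cap\sigma_3}\geq 0$ by Grassmann's identity, using that $\tau$ has dimension $s+1$, $\sigma_3$ has dimension $s$, and the ambient space has dimension $2s+1$; this produces at least one point $T_3\in\tau\cap\sigma_3$. The harder half is to exclude the possibility that $\tau\cap\sigma_3$ contains a line. Suppose, for contradiction, that $\ell\subseteq\tau\cap\sigma_3$ is a line. Since $S\in\sigma_1$ and $\sigma_1\cap\sigma_3=\varnothing$, the point $S$ does not lie on $\ell$, so the span $\pi:=\vspan{S,\ell}$ is a plane, and by construction $\pi\subseteq\tau$. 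Intersecting $\pi$ with the hyperplane $\sigma_2$ of $\tau$ yields (by Grassmann, and because $S\notin\sigma_2$ prevents $\pi\subseteq\sigma_2$) a line $\ell_2\subseteq\sigma_2\cap\pi$. Inside the projective plane $\pi$, the two lines $\ell$ and $\ell_2$ must meet in a point, which would lie in $\sigma_2\cap\sigma_3$, contradicting pairwise disjointness. This contradiction step is the technical heart of the argument.

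\medskip\noindent
With $\tau\cap\sigma_3=\{T_3\}$ in hand, the line $m:=ST_3$ is contained in $\tau$. Because $S\notin\sigma_2$, the line $m$ is not contained in $\sigma_2$, and since $\sigma_2$ is a hyperplane of $\tau$, it follows that $m$ meets $\sigma_2$ in a single point. Similarly, $m$ meets $\sigma_3$ only in $T_3$ (else $m\subseteq\sigma_3$, but $S\notin\sigma_3$), and $m$ meets $\sigma_1$ only in $S$ (else $m\subseteq\sigma_1$, forcing $T_3\in\sigma_1\cap\sigma_3=\varnothing$). This proves existence.

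\medskip\noindent
For uniqueness, I would observe that any line $m'$ through $S$ meeting $\sigma_2$ in a point $T_2'$ lies in $\vspan{S,T_2'}\subseteq\vspan{S,\sigma_2}=\tau$, and so its intersection with $\sigma_3$ belongs to $\tau\cap\sigma_3=\{T_3\}$. Hence $m'$ passes through both $S$ and $T_3$, forcing $m'=m$. No further estimates are required.
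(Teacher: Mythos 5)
Your proposal is correct and follows essentially the same route as the paper: both work inside the $(s+1)$-space $\vspan{S,\sigma_2}$, show it meets $\sigma_3$ in exactly one point, and deduce existence and uniqueness of the line from that. You simply spell out details the paper leaves implicit (the Grassmann bound, the exclusion of a line in $\vspan{S,\sigma_2}\cap\sigma_3$, and the exact intersections with each $\sigma_i$), which is fine.
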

	\begin{proof}
	    The $(s+1)$-dimensional space $\vspan{S,\sigma_2}$ necessarily intersects $\sigma_3$ exactly in a point $T$.
	    In fact, any line through $S$ meeting both $\sigma_2$ and $\sigma_3$ naturally lies in $\vspan{S,\sigma_2}$ and hence has to contain $T$.
	    As the line $\vspan{S,T}$ lies in the $(s+1)$-space $\vspan{S,\sigma_2}$, it has to intersect $\sigma_2$.
	    This is the unique line we are looking for.
	\end{proof}
	
	Consider a regulus $\Regulus$ of $\pg(2s+1,q)$ and suppose that $S$ is a point contained in an element of $\Regulus$.
	Then Lemma \ref{Lm_ThreeDisjointIntersectLine} directly implies that there exists a unique transversal line of $\Regulus$ through $S$ (see Definition \ref{Def_Regulus}).
	Therefore, we will often speak of \emph{the} transversal line of $\Regulus$ through $S$.
	
	
	\begin{lm}\label{Lm_ExistsMaximallyIntersectingInRegulus}
	    Let $s\in\NN$, let $k\in\{1,2,\dots,s+1\}$ and consider a regulus $\Regulus$ of $\pg(2s+1,q)$.
	    Let $R_1,R_2,\dots,R_k$ be $k$ independent points of an element of $\Regulus$, with $t_1,t_2,\dots,t_k$ their corresponding transversal lines.
	    Then $\TransversalSpace:=\vspan{t_1,t_2,\dots,t_k}$ is a $(2k-1)$-space intersecting each element of $\Regulus$ exactly in a $(k-1)$-space.
	\end{lm}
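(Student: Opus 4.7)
The plan is to work in a standard coordinate model of the regulus, which reduces the lemma to a short vector-space computation. The first step is to choose coordinates on $\pg(2s+1,q)$ so that $\Regulus = \{\sigma_\lambda : \lambda \in \FF_q\cup\{\infty\}\}$, with $\sigma_\lambda = \{(v:\lambda v) : v\in\FF_q^{s+1}\setminus\{0\}\}$ for $\lambda\in\FF_q$ and $\sigma_\infty = \{(0:v) : v\in\FF_q^{s+1}\setminus\{0\}\}$. A direct application of Lemma \ref{Lm_ThreeDisjointIntersectLine} will then let me identify the transversal line through a point $(v:0)\in\sigma_0$ as $\vspan{(v:0),(0:v)}$: this line meets $\sigma_1$ in $(v:v)$ and $\sigma_\infty$ in $(0:v)$, so uniqueness in that lemma does the rest.

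With this identification, writing $R_i = (v_i:0)$, the independence hypothesis on $R_1,\ldots,R_k$ in $\sigma_0$ translates to the linear independence of $v_1,\ldots,v_k$ in $\FF_q^{s+1}$. Setting $W := \vspan{v_1,\ldots,v_k}$, the span $\TransversalSpace = \vspan{t_1,\ldots,t_k}$ becomes precisely the $2k$-dimensional vector subspace $W\oplus W$, whose projective dimension is $2k-1$. For each $\sigma_\lambda\in\Regulus$ the intersection $\TransversalSpace\cap\sigma_\lambda$ is then $\{(w:\lambda w) : w\in W\setminus\{0\}\}$ (and analogously $\{(0:w) : w\in W\setminus\{0\}\}$ for $\sigma_\infty$), which is a $(k-1)$-space, as claimed.

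The main obstacle, and the only step where a genuine argument is needed, is justifying the reduction to standard coordinates: one must know that every regulus of $\pg(2s+1,q)$ is projectively equivalent to the model above. This follows from the classical fact that any three pairwise disjoint $s$-spaces of $\pg(2s+1,q)$ can be mapped to three fixed such $s$-spaces by a collineation, after which Lemma \ref{Lm_ThreeDisjointIntersectLine} forces the remaining elements of the regulus to match up with the standard model. Should one wish to avoid this reduction, a self-contained induction on $k$ is available: setting $\TransversalSpace' := \vspan{t_1,\ldots,t_{k-1}}$, one shows $t_k\cap\TransversalSpace' = \emptyset$ and that the intersections with each $\sigma\in\Regulus$ grow by exactly one projective dimension. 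The delicate point of that induction is again to show that independence of $R_1,\ldots,R_k$ in $\sigma_0$ propagates to independence of the corresponding transversal-intersection points in every other element of $\Regulus$, which is most naturally verified in the coordinate model.
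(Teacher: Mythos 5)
Your argument is correct in substance, but it takes a genuinely different route from the paper. The paper's proof is synthetic and coordinate-free: it proceeds by induction on $k$, assuming that $\TransversalSpace':=\vspan{t_1,\dots,t_{k-1}}$ meets every element of $\Regulus$ in a $(k-2)$-space, and then uses dimension (Grassmann) arguments to show that $t_k$ meets none of these $(k-2)$-spaces, from which both $\dim{\TransversalSpace}=2k-1$ and the intersection property follow; no classification of reguli is invoked, only Lemma \ref{Lm_ThreeDisjointIntersectLine}. You instead normalise $\Regulus$ to the standard model $\{\sigma_\lambda:\lambda\in\FF_q\cup\{\infty\}\}$ with $\sigma_\lambda=\{(v:\lambda v)\}$, after which the lemma collapses to a short linear-algebra computation: $\TransversalSpace$ is the projectivisation of $W\oplus W$ with $W=\vspan{v_1,\dots,v_k}$, and $\TransversalSpace\cap\sigma_\lambda$ is the projectivisation of $\{(w,\lambda w):w\in W\}$, a $(k-1)$-space. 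Your route buys brevity and makes the intersection structure completely transparent; its cost is that all the work is shifted into the claim that every regulus in the sense of Definition \ref{Def_Regulus} is projectively equivalent to this model — a classical fact, and in fairness one the paper itself asserts without proof (a regulus is one system of a Segre variety $\mathcal{S}_{1,s}$) just before Lemma \ref{Lm_SublineIsRegulus}, whereas the paper's own proof of this lemma deliberately avoids relying on it.

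One step of your reduction is stated too quickly, though it is easily repaired. After mapping three elements of $\Regulus$ onto $\sigma_0$, $\sigma_1$, $\sigma_\infty$, Lemma \ref{Lm_ThreeDisjointIntersectLine} alone does not yet ``force the remaining elements to match up with the standard model''. What you do get, combining that lemma with the defining property of a regulus, is that every further element $\sigma\in\Regulus$ meets each line $\ell_v=\vspan{(v:0),(0:v)}$, $v\in\FF_q^{s+1}\setminus\{0\}$. Since $\sigma$ is disjoint from $\sigma_0$ and $\sigma_\infty$, it is the graph $\{(v:Mv)\}$ of an invertible matrix $M$, and meeting every $\ell_v$ in a point with both blocks nonzero says that every nonzero vector is an eigenvector of $M$; hence $M$ is a scalar matrix and $\sigma=\sigma_\lambda$ for some $\lambda$. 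Adding this half-line (or an explicit citation of the classical equivalence of reguli with Segre systems) closes the only gap; with it, your proof is complete.
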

	\begin{proof}
	    Suppose w.l.o.g.\ that $R_1,R_2,\dots,R_k\in\sigma_1\in\Regulus$.
	    
	    We will prove the statement by induction on $k$.
	    If $k=1$, then the proof is done as $\TransversalSpace$ is a transversal line.
	    Hence, we can assume that $k>1$ and that $\TransversalSpace':=\vspan{t_1,t_2,\dots,t_{k-1}}$ is a $(2k-3)$-space intersecting each element $\sigma_i\in\Regulus$ exactly in a $(k-2)$-space $\sigma_i'$, $i\in\{1,2,\dots,q+1\}$.
	    
	    Note that the transversal line $t_k$ intersects at most one element of $\{\sigma_1',\sigma_2',\dots,\sigma_{q+1}'\}$.
	    Indeed, if $t_k$ would meet two distinct elements $\sigma_{i_1}'$ and $\sigma_{i_2}'$, then $t_k$ would lie in $\TransversalSpace'$ and, as a result, $R_k$ would lie in the $(k-2)$-space $\sigma_1'=\vspan{R_1,R_2,\dots,R_{k-1}}$, contradicting the fact that $R_1,R_2,\dots,R_k$ are independent points.
	    
	    Hence, suppose, to the contrary and w.l.o.g., that the transversal line $t_k$ intersects $\sigma_2'$ and is disjoint to $\sigma_1'$ and $\sigma_3'$.
	    As $t_k$ intersects $\sigma_2'$, $\dim{\vspan{\sigma_1',\sigma_2',t_k}}\leq(k-2)+(k-2)+1+1=2k-2$; conversely, as $\vspan{\sigma_1',\sigma_2'}$ intersects $\sigma_1$ in $\sigma_1'$ and as $t_k$ intersects $\sigma_1\setminus\sigma_1'$, we obtain $\dim{\vspan{\sigma_1',\sigma_2',t_k}}\geq\dim{\vspan{\sigma_1',\sigma_2'}}+1=2k-2$ and hence $\dim{\vspan{\sigma_1',\sigma_2',t_k}}=2k-2$.
	    As $\vspan{\sigma_1',\sigma_2'}=\mathcal{T}'$, we conclude that $\dim{\vspan{\mathcal{T}',t_k}}=2k-2$.
	    However, both the $(k-1)$-space $\vspan{\sigma_1',R_k}$ and the $(k-1)$-space $\vspan{\sigma_3',t_k\cap\sigma_3}$ are contained in the $(2k-2)$-space $\vspan{\mathcal{T}',t_k}$ while also being disjoint to each other, a contradiction.
	    
	    In conclusion, the transversal line $t_k$ does not intersect any element of the set $\{\sigma_1',\sigma_2',\dots,\sigma_{q+1}'\}$.
	    Hence, $\TransversalSpace=\vspan{\TransversalSpace',t_k}$ intersects each space $\sigma_i$ at least in a $(k-1)$-space, but also at most, as else we can find two disjoint spaces in $\TransversalSpace$ that span a $(k-1)+k+1=(2k)$-space, in contradiction with $\dim{\TransversalSpace}\leq\dim{\TransversalSpace'}+1+1=2k-1$.
	    The fact that $\TransversalSpace$ contains two disjoint $(k-1)$-spaces implies that the latter inequality is in fact an equality and finishes the proof.
	\end{proof}
	
	For the lemma below, as well as for the proof of Theorem \ref{Thm_Main}, we define $\FieldRed_{t+1,s+1,q}(\mathcal{P})$ to be the set of images of all points of $\mathcal{P}$ under $\FieldRed_{t+1,s+1,q}$, where $\mathcal{P}$ is an arbitrary point set of $\pg\big(t,q^{s+1}\big)$,
	
	\begin{lm}\label{Lm_ExistsMaximallyIntersectingInGeneralisedRegulus}
	    Let $s\in\NN$ and $t\in\NNnot$.
	    Consider a $(t-1)$-dimensional $\FF_q$-subgeometry $\HyperSub$ of $\pg\big(t-1,q^{s+1}\big)$ and let $\mathcal{P}_\HyperSub$ be its point set.
	    Then there exists an $(st-1)$-dimensional space of $\pg(st+t-1,q)$ intersecting each element of the set $\FieldRed_{t,s+1,q}(\mathcal{P}_\HyperSub)$ exactly in an $(s-1)$-space.
	\end{lm}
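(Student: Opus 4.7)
The plan is to construct the desired subspace explicitly by exploiting the field-extension structure hidden in $\FieldRed_{t,s+1,q}$. I first identify $\pg(st+t-1,q)$ with the projectivisation of an $\FF_q$-vector space $V$ of dimension $t(s+1)$ that simultaneously carries the structure of a $t$-dimensional $\FF_{q^{s+1}}$-vector space, with $\pg(t-1,q^{s+1})=\pg(V)$ viewed as an $\FF_{q^{s+1}}$-projective space. In this picture $\HyperSub$ is realised by a $t$-dimensional $\FF_q$-subspace $U\subseteq V$ with $\FF_{q^{s+1}}\cdot U=V$, its points are the $\FF_q$-lines through $0$ in $U$, and for each point $P$ represented by $v\in U$, $\FieldRed_{t,s+1,q}(P)$ is the projectivisation of $\FF_{q^{s+1}}\cdot v$, an $(s+1)$-dimensional $\FF_q$-subspace of $V$.

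I then pick any $\FF_q$-hyperplane $M\subset\FF_{q^{s+1}}$ (an $s$-dimensional $\FF_q$-subspace) and take the desired $(st-1)$-space to be the projectivisation of $H:=M\cdot U$, the $\FF_q$-span of all products $m\cdot u$ with $m\in M$, $u\in U$. Fixing an $\FF_q$-basis $e_1,\dots,e_t$ of $U$ — which is simultaneously an $\FF_{q^{s+1}}$-basis of $V$ — yields the direct sum decomposition $H=\bigoplus_{i=1}^{t} M\cdot e_i$, of $\FF_q$-dimension $ts=st$, so projectively $H$ has the required dimension $st-1$.

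To verify the intersection property, consider $P=[v]\in\HyperSub$ with $v=\sum_{i=1}^{t} c_i e_i$, $c_i\in\FF_q$ not all zero. For $\beta\in\FF_{q^{s+1}}$, the element $\beta v=\sum_{i=1}^{t}(c_i\beta)e_i$ lies in $H$ if and only if $c_i\beta\in M$ for every $i$; since some $c_i$ lies in $\FF_q^{\ast}$ and $M$ is $\FF_q$-stable, this collapses to $\beta\in M$. Hence $H\cap(\FF_{q^{s+1}}\cdot v)=M\cdot v$ has $\FF_q$-dimension $s$, so $H$ intersects $\FieldRed_{t,s+1,q}(P)$ in exactly an $(s-1)$-space for every $P\in\HyperSub$.

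The main obstacle is not the calculation itself but phrasing the construction in a way that meshes with the paper's axiomatic treatment of field reduction, since the ambient tensorial / $M\cdot U$ description is not developed in the preliminaries. A purely geometric alternative would be induction on $t$, with base case $t=2$ handled by Lemma~\ref{Lm_SublineIsRegulus} and Lemma~\ref{Lm_ExistsMaximallyIntersectingInRegulus} (applied with $k=s$), and the inductive step building the transversal space as $\langle\TransversalSpace_0,\sigma^{\ast}\rangle$, where $\TransversalSpace_0$ is supplied by the hypothesis applied to a sub-hyperplane $\HyperSub_0\subset\HyperSub$ and $\sigma^{\ast}\subset\FieldRed_{t,s+1,q}(P_0)$ is extracted via Lemma~\ref{Lm_ExistsMaximallyIntersectingInRegulus} applied to the regulus of some subline $\overline{P_0P'}$ with $P'\in\HyperSub_0$. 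The hardest step on that route would be showing that the resulting $\sigma^{\ast}$ is independent of the auxiliary choice of $P'$ — a consistency condition that is transparent from the algebraic picture above but markedly more delicate to prove by direct regulus arguments alone.
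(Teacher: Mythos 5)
Your construction is correct, and it takes a genuinely different route from the paper. You work directly in the vector-space model of field reduction: writing $\pg\big(t-1,q^{s+1}\big)=\pg(V)$ with $V$ a $t$-dimensional $\FF_{q^{s+1}}$-space, realising the $(t-1)$-dimensional subgeometry as the projectivisation of an $\FF_q$-subspace $U$ of $\FF_q$-dimension $t$ spanning $V$ over $\FF_{q^{s+1}}$, and taking $H=M\cdot U$ for an $\FF_q$-hyperplane $M$ of $\FF_{q^{s+1}}$. The computation is sound: an $\FF_q$-basis of $U$ is automatically an $\FF_{q^{s+1}}$-basis of $V$, so $H=\bigoplus_i Me_i$ has $\FF_q$-dimension $st$, and your argument that $\beta v\in H$ with $v=\sum_i c_ie_i$, some $c_j\in\FF_q\setminus\{0\}$, forces $\beta\in M$ correctly yields $H\cap\FF_{q^{s+1}}v=Mv$, an $s$-dimensional $\FF_q$-space, hence a projective $(s-1)$-space. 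The only ingredients not developed in the paper's preliminaries are the standard facts that every $(t-1)$-dimensional $\FF_q$-subgeometry of $\pg\big(t-1,q^{s+1}\big)$ arises from such a $U$ and that $\FieldRed_{t,s+1,q}$ sends the point represented by $v$ to the projectivisation of $\FF_{q^{s+1}}v$; both are in the field-reduction survey the paper cites, so this is a presentational rather than a mathematical gap, exactly as you flag. The paper instead proceeds by induction on $t$, staying entirely within the stated properties of Lemma \ref{Lm_FieldRedProps} and the regulus machinery: the base case $t=2$ is handled as you suggest via Lemmas \ref{Lm_SublineIsRegulus} and \ref{Lm_ExistsMaximallyIntersectingInRegulus} with $k=s$, but the inductive step takes $\TransversalSpace=\vspan{\TransversalSpace_\mathfrak{L},\TransversalSpace_{\HyperSub'}}$ where $\TransversalSpace_\mathfrak{L}$ is a full $(2s-1)$-dimensional transversal space (through $\sigma_Q'$) of the regulus of a subline $\mathfrak{L}\nsubseteq\HyperSub'$ --- not a single $(s-1)$-space $\sigma^\ast$ as in your sketched geometric alternative --- and then verifies the intersection condition at each remaining point $P$ of $\HyperSub$ through the subplane $\vspan{\mathfrak{L},P}$, two auxiliary sublines through $P$ and Grassmann dimension counts, which sidesteps the consistency issue you anticipate. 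In short, your algebraic proof buys brevity and uniformity in $t$ (no induction, no case distinctions), while the paper's proof buys self-containedness within its deliberately axiomatic treatment of the field reduction map.
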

	\begin{proof}
	    In this proof, we will extend the notation $\mathcal{P}_\mathcal{A}$ as being the point set of any $\FF_q$-subgeometry $\mathcal{A}$.
	    
	    The proof will be done by induction on $t$.
	    If $t=1$, the statement is trivially true.
	    If $t=2$, then $\HyperSub$ is an $\FF_q$-subline and hence, by Lemma \ref{Lm_SublineIsRegulus}, $\FieldRed_{2,s+1,q}(\mathcal{P}_\HyperSub)$ is a regulus of $\pg(2s+1,q)$.
	    Thus, the statement follows from Lemma \ref{Lm_ExistsMaximallyIntersectingInRegulus} by choosing $s$ independent points in an element of $\FieldRed_{2,s+1,q}(\mathcal{P}_\HyperSub)$.
	    Therefore, we may assume that $t>2$ and consider a $(t-2)$-dimensional $\FF_q$-subgeometry $\HyperSub'\subseteq\HyperSub$ (spanning a $(t-2)$-space $\Sigma$) for which there exists an $(st-s-1)$-dimensional subspace $\TransversalSpace_{\HyperSub'}$ of $\FieldRed_{t,s+1,q}(\Sigma)$ intersecting each element of the set $\FieldRed_{t,s+1,q}(\mathcal{P}_{\HyperSub'})$ exactly in an $(s-1)$-space.
	    
	    Now
	    \begin{itemize}
	        \item define $\Pi_\Sigma:=\FieldRed_{t,s+1,q}(\Sigma)$,
	        \item let $Q\in\HyperSub'$ and define $\sigma_Q:=\FieldRed_{t,s+1,q}(Q)$ and $\sigma_Q':=\TransversalSpace_{\HyperSub'}\cap\sigma_Q$,
	        \item consider an $\FF_q$-subline $\mathfrak{L}$ of $\HyperSub$ through $Q$, not contained in $\HyperSub'$, and let $\Pi_\mathfrak{L}$ be the image of its span under $\FieldRed_{t,s+1,q}$,
	        \item define $\Regulus_\mathfrak{L}:=\FieldRed_{t,s+1,q}(\mathcal{P}_\mathfrak{L})$.
	    \end{itemize}
	    By  Lemma \ref{Lm_SublineIsRegulus}, $\Regulus_\mathfrak{L}$ is a regulus contained in the $(2s+1)$-dimensional space $\Pi_\mathfrak{L}$.
	    Hence, by Lemma \ref{Lm_ExistsMaximallyIntersectingInRegulus}, we can consider a $(2s-1)$-dimensional subspace $\TransversalSpace_\mathfrak{L}$ in $\Pi_\mathfrak{L}$ through $\sigma_Q'$ intersecting each element of $\Regulus_\mathfrak{L}$ exactly in an $(s-1)$-space.
	    Moreover, by Lemma \ref{Lm_FieldRedProps}, the image $\FieldRed_{t,s+1,q}(R)$ of any point $R\in\mathfrak{L}\setminus\HyperSub'$ is disjoint to $\Pi_\Sigma$, which implies that the $(2s+1)$-space $\Pi_\mathfrak{L}$ intersects the $(st-s+t-2)$-space $\Pi_\Sigma$ precisely in the $s$-space $\sigma_Q$.
	    As a consequence, $\TransversalSpace_\mathfrak{L}$ intersects $\TransversalSpace_{\HyperSub'}$ exactly in the $(s-1)$-space $\sigma_Q'$, hence
	    \[
	        \TransversalSpace:=\vspan{\TransversalSpace_\mathfrak{L},\TransversalSpace_{\HyperSub'}}
	    \]
	    has dimension $(2s-1)+(st-s-1)-(s-1)=st-1$.
	    We will prove that $\TransversalSpace$ is the $(st-1)$-space of $\pg(st+t-1,q)$ we are looking for.
	    
	    \begin{figure}
	        \begin{center}\begin{tikzpicture}
	            
	            \node[draw,fill=none,anchor=west] at (-3.6,3) {\large\textbf{PG}$\boldsymbol{\big(3s+2,q\big)}$};
    			\node[draw,fill=none,anchor=west] at (-3.6,3) {\large\textbf{PG}$\boldsymbol{\big(3s+2,q\big)}$}; 
    			
    			\draw[line width=1.2pt, line join=round, line cap=round] (-3.5,-2.5) -- (2.5,3.5) to [out=45, in=45] (3.5,2.5) -- (-2.5,-3.5) to [out=225, in=315] (-3.5,-3.5) to [out=135, in=225] cycle;
    			\node[draw=none,fill=none] at (3.5,3.5) {$\boldsymbol{\Pi}$};
    			\node[draw=none,fill=none] at (4.3,3) {\scriptsize$\boldsymbol{[2s+1]}$};
    			
    			\draw[line width=1.2pt, line join=round, line cap=round] (-3,-2.293) -- (5.485,-2.293) to [out=0, in=0] (5.485,-3.707) -- (-3,-3.707) to [out=180, in=270] (-3.707,-3) to [out=90, in=180] cycle;
    			\node[draw=none,fill=none] at (5.8,-2) {$\boldsymbol{\Pi'}$};
    			\node[draw=none,fill=none] at (6.5,-2.5) {\scriptsize$\boldsymbol{[2s+1]}$};
    			
    			\draw[line width=1.05pt] (-3,-3) circle (15pt);
    			\node[draw=none,fill=none] at (-3.1,-3.2) {$\boldsymbol{\sigma_Q}$};
    			
    			\draw[line width=1.05pt] (2,-1) circle (15pt);
    			\node[draw=none,fill=none] at (1.8,-1.15) {$\boldsymbol{\sigma_P}$};

    			\draw[line width=1.05pt] (0.5,0.5) circle (15pt);
    			\node[draw=none,fill=none, rotate around={45:(0,0.55)}] at (0,0.55) {$\boldsymbol{\sigma_{Q_1}}$};
    			\draw[line width=1.05pt] (2,2) circle (15pt);
    			\node[draw=none,fill=none] at (2,2.15) {$\boldsymbol{\sigma_{Q_2}}$};

    			\draw[line width=1.05pt] (2,-3) circle (15pt);
    			\node[draw=none,fill=none] at (1.95,-3.25) {$\boldsymbol{\sigma_{Q_2'}}$};
    			\draw[line width=1.05pt] (4,-3) circle (15pt);
    			\node[draw=none,fill=none] at (3.95,-3.25) {$\boldsymbol{\sigma_{Q_1'}}$};
    			
    			\draw[line width=1.05pt] (-1,-1) circle (15pt);
    			\draw[line width=1.05pt] (0,-3) circle (15pt);
    			
    			\draw[line width=1.05pt, dashdotted, line join=round, line cap=round] (4.5,-2.5) -- (1,1) to [out=135, in=45] (0,1) to [out=225, in=135] (0,0) -- (3.5,-3.5) to [out=315, in=225] (4.5,-3.5) to [out=45, in=315] cycle;
    			\node[draw=none,fill=none] at (3.5,-1) {$\boldsymbol{\Pi_1}$};
    			\node[draw=none,fill=none] at (4.5,-1) {\scriptsize$\boldsymbol{[2s+1]}$};
    			
    			\draw[line width=1.05pt, dashdotted, line join=round, line cap=round] (2.707,-3) -- (2.707,2) to [out=90, in=0] (2,2.707) to [out=180, in=90] (1.293,2) -- (1.293,-3) to [out=270, in=180] (2,-3.707) to [out=0, in=270] cycle;
    			\node[draw=none,fill=none] at (3.2,0.75) {$\boldsymbol{\Pi_2}$};
    			\node[draw=none,fill=none] at (4.2,0.75) {\scriptsize$\boldsymbol{[2s+1]}$};
    			
    			\draw[line width=1.2pt, rounded corners=5pt, densely dotted] (-3,-3) -- (2,2) -- (2,-1) -- (4,-3) -- cycle;
    			
	        \end{tikzpicture}\end{center}
	        \caption{A visualisation of the proof of Lemma \ref{Lm_ExistsMaximallyIntersectingInGeneralisedRegulus}, or how $\vspan{\TransversalSpace_\mathfrak{L},\TransversalSpace_{\mathfrak{L}'}}$ should intersect $\sigma_P$ in an $(s-1)$-space, $P\in\vspan{\mathfrak{L},\mathfrak{L}'}\setminus\left(\mathfrak{L}\cup\mathfrak{L}'\right)$. All circles are $s$-spaces.}
	        \label{Fig_ExistenceOfChi}
	    \end{figure}
	    
	    \bigskip
	    Choose an arbitrary $P\in\HyperSub$ and define $\sigma_P:=\FieldRed_{t,s+1,q}(P)$.
	    The only thing left to prove is that $\TransversalSpace$ intersects $\sigma_P$ exactly in an $(s-1)$-space.
	    Note that $\TransversalSpace$ intersects $\Pi_\mathfrak{L}$ and $\Pi_\Sigma$ at least in $\TransversalSpace_\mathfrak{L}$ and $\TransversalSpace_{\HyperSub'}$, respectively, but also \textit{at most}, as else we can use Grassmann's identity to prove that $\dim{\TransversalSpace}>st-1$, a contradiction.
	    If $P\in\mathfrak{L}\cup\HyperSub'$, then $\sigma_P$ is contained in either $\Pi_\mathfrak{L}$ or $\Pi_\Sigma$, hence $\TransversalSpace$ will intersect $\sigma_P$ exactly in an $(s-1)$-space.
	    
	    Now suppose $P\notin\mathfrak{L}\cup\HyperSub'$.
	    Let $\mathfrak{P}$ be the $\FF_q$-subplane of $\HyperSub$ spanned by $\mathfrak{L}$ and $P$; this subplane intersects $\HyperSub'$ in an $\FF_q$-subline $\mathfrak{L}'$ through $Q$; let $\Pi_{\mathfrak{L}'}$ be the image under $\FieldRed_{t,s+1,q}$ of the span of $\mathfrak{L}'$.
	    Note that by the above arguments, $\TransversalSpace$ will intersect $\Pi_{\mathfrak{L}'}$ in an $(2s-1)$-space $\TransversalSpace_{\mathfrak{L}'}$, in a similar way as it intersects $\Pi_\mathfrak{L}$.
	    Hence, we can focus on the $(3s+2)$-space $\vspan{\Pi_\mathfrak{L},\Pi_{\mathfrak{L}'}}\supseteq\sigma_P$ to continue this proof (see Figure \ref{Fig_ExistenceOfChi}).
	    
	    Let $\mathfrak{L}_1$ and $\mathfrak{L}_2$ be two distinct $\FF_q$-sublines in $\mathfrak{P}$ through $P$, not containing $Q$.
	    Define
	    \[
	        Q_1:=\mathfrak{L}\cap\mathfrak{L}_1\textnormal{,}\quad Q_2:=\mathfrak{L}\cap\mathfrak{L}_2\textnormal{,}\quad Q_1':=\mathfrak{L}'\cap\mathfrak{L}_1\textnormal{,}\quad Q_2':=\mathfrak{L}'\cap\mathfrak{L}_2\textnormal{.}
	    \]
	    Correspondingly, define
	    \[
	        \sigma_{Q_1}:=\FieldRed_{t,s+1,q}(Q_1)\textnormal{,}\quad \sigma_{Q_2}:=\FieldRed_{t,s+1,q}(Q_2)\textnormal{,}\quad \sigma_{Q_1'}:=\FieldRed_{t,s+1,q}(Q_1')\textnormal{,}\quad \sigma_{Q_2'}:=\FieldRed_{t,s+1,q}(Q_2')\textnormal{,}
	    \]
	    and
	    \[
	        \sigma_{Q_1}':=\TransversalSpace\cap\sigma_{Q_1}\textnormal{,}\quad \sigma_{Q_2}':=\TransversalSpace\cap\sigma_{Q_2}\textnormal{,}\quad \sigma_{Q_1'}':=\TransversalSpace\cap\sigma_{Q_1'}\textnormal{,}\quad \sigma_{Q_2'}':=\TransversalSpace\cap\sigma_{Q_2'}\textnormal{.}
	    \]
	    As $Q_1,Q_2,Q_1',Q_2'\in\mathfrak{L}\cup\mathfrak{L}'\subseteq\mathfrak{L}\cup\HyperSub'$, we know that the above spaces are $(s-1)$-spaces.
	    Finally, define
	    \[
	        \Pi:=\vspan{\sigma_{Q_1},\sigma_{Q_2}}=\Pi_\mathfrak{L}\textnormal{,}\quad
	        \Pi':=\vspan{\sigma_{Q_1'},\sigma_{Q_2'}}=\Pi_{\mathfrak{L}'}\textnormal{,}\quad \Pi_1:=\vspan{\sigma_{Q_1},\sigma_{Q_1'}}\textnormal{,}\quad \Pi_2:=\vspan{\sigma_{Q_2},\sigma_{Q_2'}}\textnormal{,}
	    \]
	    these all being $(2s+1)$-spaces of $\pg(st+t-1,q)$.
	    Note that $\sigma_Q$ is contained in both $\Pi$ and $\Pi'$.
	    
	    Observe that, as $\Pi\cap\Pi'=\sigma_Q$ and as both $\TransversalSpace_\mathfrak{L}\subseteq\Pi$ and $\TransversalSpace_{\mathfrak{L}'}\subseteq\Pi'$ intersect $\sigma_Q$ in the $(s-1)$-space $\sigma_Q'$, the intersection $\TransversalSpace_\mathfrak{L}\cap\TransversalSpace_{\mathfrak{L}'}$ has dimension $s-1$, hence $\dim{\vspan{\TransversalSpace_\mathfrak{L},\TransversalSpace_{\mathfrak{L}'}}}=3s-1$.
	    
	    We can prove that $\vspan{\TransversalSpace_\mathfrak{L},\TransversalSpace_{\mathfrak{L}'}}\cap\Pi_1=\vspan{\sigma_{Q_1}',\sigma_{Q_1'}'}$.
	    Indeed, we know that $\Pi_1=\vspan{\sigma_{Q_1},\sigma_{Q_1'}}$; as $\TransversalSpace_\mathfrak{L}$ intersects $\sigma_{Q_1}$ in $\sigma_{Q_1}'$ and as $\TransversalSpace_{\mathfrak{L}'}$ intersects $\sigma_{Q_1'}$ in $\sigma_{Q_1'}'$, the space $\vspan{\TransversalSpace_\mathfrak{L},\TransversalSpace_{\mathfrak{L}'}}\cap\Pi_1$ contains $\vspan{\sigma_{Q_1}',\sigma_{Q_1'}'}$.
	    If $\vspan{\TransversalSpace_\mathfrak{L},\TransversalSpace_{\mathfrak{L}'}}$ would contain a subspace of $\Pi_1$ of dimension larger than $\dim{\vspan{\sigma_{Q_1}',\sigma_{Q_1'}'}}$, then $\vspan{\TransversalSpace_\mathfrak{L},\TransversalSpace_{\mathfrak{L}'}}$ would contain both the $(s-1)$-space $\sigma_Q'\subseteq\sigma_Q$ and a $(2s)$-subspace of $\Pi_1$, which are disjoint to each other as $\sigma_Q$ and $\Pi_1$ are disjoint by Lemma \ref{Lm_FieldRedProps}.
	    This would imply that $\dim{\vspan{\TransversalSpace_\mathfrak{L},\TransversalSpace_{\mathfrak{L}'}}}\geq(s-1)+2s+1=3s$, a contradiction.
	    
	    As a consequence, $\TransversalSpace$ cannot contain $\sigma_P$, as else the $(2s-1)$-space $\vspan{\TransversalSpace_\mathfrak{L},\TransversalSpace_{\mathfrak{L}'}}\cap\Pi_1$ contains both the $(s-1)$-space $\sigma_{Q_1}'$ and the $s$-space $\sigma_P$, which are disjoint to each other, a contradiction.
	    Hence, $\TransversalSpace$ intersects $\sigma_P$ at most in an $(s-1)$-space.
	    It remains to prove that $\TransversalSpace$ intersects $\sigma_P$ \emph{at least} in an $(s-1)$-space
	    
	    As $\Pi\cap\Pi'=\sigma_Q$, we have that
	    \[
	        \dim{\vspan{\Pi_1,\Pi_2}}=\dim{\vspan{\sigma_{Q_1},\sigma_{Q_1'},\sigma_{Q_2},\sigma_{Q_2'}}}=\dim{\vspan{\Pi,\Pi'}}=3s+2\textnormal{.}
        \]
        Hence, $\dim{\Pi_1\cap\Pi_2}=(2s+1)+(2s+1)-(3s+2)=s$.
        As, by Lemma \ref{Lm_FieldRedProps}, the $s$-space $\sigma_P$ is contained in both $\Pi_1$ and $\Pi_2$, this means that $\Pi_1\cap\Pi_2=\sigma_P$ (see Figure \ref{Fig_ExistenceOfChi}).
        
        Recall that $\TransversalSpace_\mathfrak{L}=\vspan{\sigma_{Q_1}',\sigma_{Q_2}'}$, $\TransversalSpace_\mathfrak{L'}=\vspan{\sigma_{Q_1'}',\sigma_{Q_2'}'}$ and that the span $\vspan{\TransversalSpace_\mathfrak{L},\TransversalSpace_{\mathfrak{L}'}}$ is a $(3s-1)$-dimensional space.
        Hence, we can make a similar reasoning as above and obtain that
        \begin{align*}
	        \dim{\vspan{\sigma_{Q_1}',\sigma_{Q_1'}'}\cap\vspan{\sigma_{Q_2}',\sigma_{Q_2'}'}}&=2(2s-1)-\dim{\vspan{\sigma_{Q_1}',\sigma_{Q_1'}',\sigma_{Q_2}',\sigma_{Q_2'}'}}\\
	        &=2(2s-1)-\dim{\vspan{\TransversalSpace_\mathfrak{L},\TransversalSpace_{\mathfrak{L}'}}}\\
	        &=s-1\textnormal{.}
        \end{align*}
        As $\vspan{\sigma_{Q_1}',\sigma_{Q_1'}'}\subseteq\Pi_1$ and $\vspan{\sigma_{Q_2}',\sigma_{Q_2'}'}\subseteq\Pi_2$, the $(s-1)$-space $\vspan{\sigma_{Q_1}',\sigma_{Q_1'}'}\cap\vspan{\sigma_{Q_2}',\sigma_{Q_2'}'}$ lies in $\Pi_1\cap\Pi_2=\sigma_P$.
        Hence, $\TransversalSpace$ intersects $\sigma_P$ at least in this $(s-1)$-space, and the proof is done.
	\end{proof}
	
	\section{The isomorphism between $\boldsymbol{X(s,t,q)}$ and $\boldsymbol{Y(s,t,q)}$}
	
	We now have all the tools we need to construct an isomorphism between $X(s,t,q)$ and $Y(s,t,q)$.
	The point-line geometry $Y(s,t,q)$ is embedded in $\pg\big(t,q^{s+1}\big)$; in this section, we will use the same notation as introduced in Definition \ref{Def_PointLineY}.
    Consider the field reduction map \eqref{Eq_FieldReduction}:
    \[
        \FieldRed_{t+1,s+1,q}:\pg\big(t,q^{s+1}\big)\rightarrow\pg(st+s+t,q)\textnormal{.}
    \]
    To simplify notation, we define $\FieldRed:=\FieldRed_{t+1,s+1,q}$ throughout this section.
	We refer to Figure \ref{Fig_PointLineIsomorphism} for a visualisation of the isomorphism $\varphi$ we are about to define.
	
	\begin{df}\label{Def_IsomorphismPhi}
	    Let $s\in\NN$, $t\in\NNnot$ and consider the point-line geometry $Y(s,t,q)$ together with all corresponding notation (see Definition \ref{Def_PointLineY}).
	    By Lemma \ref{Lm_ExistsMaximallyIntersectingInGeneralisedRegulus} \big(and temporarily restricting the field reduction map to $\Sigma\cong\pg\big(t-1,q^{s+1}\big)$\big), we can consider an $(st-1)$-dimensional subspace $\chi$ of the $(st+t-1)$-space $\FieldRed(\Sigma)$ that intersects $\FieldRed(Q)$ exactly in an $(s-1)$-space, for every $Q\in\HyperSub$ (see Figure \ref{Fig_PointLineIsomorphism}).
	    
	    Now consider a duality $d$ of $\pg(st+s+t,q)$ and define the map
	    \[
	        \varphi:\pg\big(t,q^{s+1}\big)\rightarrow\pg(s+t,q):\tau\mapsto\left(\FieldRed(\tau)^d\cap\chi^d\right)\textnormal{,}
	    \]
	    where we identify $\chi^d\cong\pg(s+t,q)$ and where $\tau$ is a subspace of $\pg\big(t,q^{s+1}\big)$.
	\end{df}
	
	\begin{thm}\label{Thm_Main}
	    Let $s\in\NN$ and $t\in\NNnot$.
	    Then $\varphi$ induces an isomorphism between $Y(s,t,q)$ and $X(s,t,q)$.
	\end{thm}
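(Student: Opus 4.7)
The plan is to verify that $\varphi$ restricts to a bijection from affine points of $\pg(t,q^{s+1})$ to points of $X(s,t,q)$, extends naturally to lines, and preserves incidence in both directions. First, I set $\pi:=\varphi(\Sigma)$; since $\chi\subseteq\FieldRed(\Sigma)$, we have $\varphi(\Sigma)=\FieldRed(\Sigma)^d$, which by Lemma \ref{Lm_FieldRedProps} and duality is an $s$-dimensional subspace of $\chi^d\cong\pg(s+t,q)$, and I take this to play the role of the distinguished $s$-space of $X(s,t,q)$.

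For the point map, let $P$ be an affine point. By Lemma \ref{Lm_FieldRedProps}, $\FieldRed(P)$ is an $s$-space disjoint from $\FieldRed(\Sigma)\supseteq\chi$, so $\vspan{\FieldRed(P),\chi}$ has dimension $st+s$ and $\varphi(P)$ has dimension $t-1$. Disjointness $\varphi(P)\cap\pi=\emptyset$ follows from $\vspan{\FieldRed(P),\FieldRed(\Sigma)}=\pg(st+s+t,q)$, since field reduction preserves spans and $\vspan{P,\Sigma}$ is the whole ambient space. Injectivity on affine points reduces to showing that $\vspan{\FieldRed(P),\chi}=\vspan{\FieldRed(P'),\chi}$ forces $P=P'$, which follows from the disjointness of distinct spread elements of the Desarguesian spread $\{\FieldRed(R):R\in\pg(t,q^{s+1})\}$ together with dimension counting; surjectivity is then a cardinality check, as $\ppointsY$ and $\ppointsX$ both contain $q^{t(s+1)}$ elements.

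For each line $\ell=\Sub\setminus\HyperSub$ of $Y(s,t,q)$, I re-run the inductive construction behind Lemma \ref{Lm_ExistsMaximallyIntersectingInGeneralisedRegulus}, starting from $\chi$ itself and adjoining the $(2s-1)$-dimensional transversal space inside $\FieldRed(\mathfrak{L})$ supplied by Lemma \ref{Lm_ExistsMaximallyIntersectingInRegulus}, where $\mathfrak{L}\subseteq\Sub$ is an $\FF_q$-subline meeting $\HyperSub$ in a single point $Q$ (the compatibility needed is that $\chi\cap\FieldRed(Q)$ is already the correct $(s-1)$-space inside $\FieldRed(Q)$). This produces an $(st+s-1)$-dimensional subspace $\TransversalSpace_\Sub\supseteq\chi$ of $\pg(st+s+t,q)$ meeting $\FieldRed(P)$ in an $(s-1)$-space for every $P\in\Sub$. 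I define $\varphi(\ell):=\TransversalSpace_\Sub^d$; since $\chi\subseteq\TransversalSpace_\Sub$, this is a $t$-subspace of $\chi^d$. A dimension count using $\TransversalSpace_\Sub\cap\FieldRed(\Sigma)=\chi$ then shows $\varphi(\ell)\cap\pi$ is a single point, so $\varphi(\ell)$ is indeed a line of $X(s,t,q)$.

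For incidence: if $P\in\Sub\setminus\HyperSub$, then $\chi\subseteq\FieldRed(\Sigma)$ and $\TransversalSpace_\Sub\cap\FieldRed(P)\subseteq\FieldRed(P)$ are disjoint (as $\FieldRed(P)\cap\FieldRed(\Sigma)=\emptyset$), and their dimensions sum to $(st-1)+(s-1)+1=\dim{\TransversalSpace_\Sub}$, forcing $\TransversalSpace_\Sub=\vspan{\chi,\TransversalSpace_\Sub\cap\FieldRed(P)}\subseteq\vspan{\FieldRed(P),\chi}$; dualising yields $\varphi(P)\subseteq\varphi(\ell)$. I expect the main obstacle to be twofold: first, producing $\TransversalSpace_\Sub$ that actually contains $\chi$ will require re-entering the inductive construction of Lemma \ref{Lm_ExistsMaximallyIntersectingInGeneralisedRegulus} rather than invoking it as a black box; second, the incidence-reflected direction—if $\varphi(P)\subseteq\varphi(\ell)$ then $P\in\Sub$—is not immediate dimension-wise. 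The cleanest way around the second difficulty is to invoke the two isomorphisms from the Preliminaries: both $X(s,t,q)$ and $Y(s,t,q)$ are isomorphic to $T^*(\AffineHyperSub_{s,t,q})$, so the point and line counts agree, and an injective incidence-preserving map between them is automatically an isomorphism.
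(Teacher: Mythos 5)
Your proposal is correct in outline and, on the crucial ``lines'' half of the theorem, takes a genuinely different route from the paper. The point part ($\varphi(P)$ is a $(t-1)$-space disjoint from $\pi$, then a cardinality count $q^{st+t}$ on both sides) is essentially the paper's Claim~$1$. For lines, the paper never constructs the image line directly: it shows that the images of any two points of $\Sub\setminus\HyperSub$ meet in a $(t-2)$-space, invokes the Erd\H{o}s--Ko--Rado dichotomy for such families (all contained in a $t$-space, or all through a common $(t-2)$-space), excludes the second alternative for $t\geq 2$ by a span/duality argument, and settles $t=1$ separately via reguli. You instead build, for every line $\Sub\setminus\HyperSub$, an explicit $(st+s-1)$-space $\TransversalSpace_\Sub\supseteq\chi$ meeting each $\FieldRed(P)$, $P\in\Sub$, in an $(s-1)$-space, by running the inductive step of Lemma \ref{Lm_ExistsMaximallyIntersectingInGeneralisedRegulus} one dimension higher with $\chi$ as the prescribed transversal space of the hyperplane subgeometry; this is legitimate, since that inductive step only uses the stated properties of the given transversal space, so it applies verbatim with $\HyperSub\subseteq\Sub$ in place of $\HyperSub'\subseteq\HyperSub$, and it does return a space containing $\chi$ with $\TransversalSpace_\Sub\cap\FieldRed(\Sigma)=\chi$. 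Dualising gives the $t$-space of $X(s,t,q)$ explicitly, and your observation $\TransversalSpace_\Sub=\vspan{\chi,\TransversalSpace_\Sub\cap\FieldRed(P)}\subseteq\vspan{\chi,\FieldRed(P)}$ yields incidence at once. This buys an explicit line map and avoids both the Erd\H{o}s--Ko--Rado dichotomy and the separate case $t=1$; the price is exactly the one you flag, namely re-entering (or restating) Lemma \ref{Lm_ExistsMaximallyIntersectingInGeneralisedRegulus} so that the new transversal space contains a prescribed one.

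Two steps still deserve care. First, injectivity of the point map: disjointness of spread elements plus dimension counting is \emph{not} sufficient for $t\geq 2$. If $\vspan{\FieldRed(P),\chi}=\vspan{\FieldRed(P'),\chi}=:M$ for distinct affine points $P,P'$ and $Q:=PP'\cap\Sigma$, then $\FieldRed(Q)\subseteq\vspan{\FieldRed(P),\FieldRed(P')}\subseteq M$ while $M\cap\FieldRed(\Sigma)=\chi$, so injectivity is precisely the assertion that $\chi$ contains no full spread element $\FieldRed(Q)$, $Q\in\Sigma$ --- a property of the particular $\chi$ of Lemma \ref{Lm_ExistsMaximallyIntersectingInGeneralisedRegulus}, not a consequence of dimensions alone (an $(st-1)$-space can contain $s$-spaces once $t\geq2$). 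The paper is equally brisk here, but a complete argument should prove this non-containment, e.g.\ by the same induction that builds $\chi$, or by noting that $\chi$ may be taken of the form $U\times\dots\times U$ with $U$ an $s$-dimensional $\FF_q$-subspace of $\FF_{q^{s+1}}$, which can contain no $\FF_{q^{s+1}}$-line. Second, your closing appeal to ``injective incidence-preserving plus matching counts implies isomorphism'' needs, besides the global counts, that every line of $Y(s,t,q)$ and of $X(s,t,q)$ carries the same number $q^t$ of points; with that, bijectivity on points forces the points of $\Sub\setminus\HyperSub$ to map \emph{onto} the points of its image line, from which injectivity and surjectivity of the line map and the reflection of incidence follow. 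Quoting the two Results from the Preliminaries merely for the counts is harmless but slightly against the spirit of the note; the counts can be obtained directly, as the paper does for $|\ppointsX|$.
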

	\begin{proof}
	    By the properties of a duality, $\chi\subseteq\FieldRed(\Sigma)$ implies $\FieldRed(\Sigma)^d\subseteq\chi^d$, hence we have that $\varphi(\Sigma)=\FieldRed(\Sigma)^d$.
	    In line with Definition \ref{Def_PointLineX}, we now
	    \begin{itemize}
	        \item identify $\chi^d$ with $\pg(s+t,q)$, and
	        \item define $\pi:=\varphi(\Sigma)=\FieldRed(\Sigma)^d$.
	    \end{itemize}
	    
	    \smallskip
	    \underline{Claim $1$}: $\varphi$ is a bijection between $\ppointsY$ and $\ppointsX$.
	    
	    \bigskip
	    Let $P$ be a point of $\ppointsY$.
	    As $P\notin\Sigma$, by Lemma \ref{Lm_FieldRedProps}, $\FieldRed(P)$ is an $s$-space disjoint to the $(st+t-1)$-space $\FieldRed(\Sigma)$.
	    Hence, $\FieldRed(P)^d$ is an $(st+t-1)$-space of $\pg(st+s+t,q)$ disjoint to the $s$-space $\FieldRed(\Sigma)^d=\varphi(\Sigma)=\pi$.
	    Therefore, $\chi^d$ intersects $\FieldRed(P)^d$ in a space of dimension at least $t-1$, as both spaces are contained in $\pg(st+s+t,q)$.
	    Conversely, $\chi^d$ intersects $\FieldRed(P)^d$ in a space of dimension at most $t-1$ as $\pi$ and $\FieldRed(P)^d\cap\chi^d$ are disjoint spaces that are both contained in the $(s+t)$-space $\chi^d$.
	    
	    As a result, $\varphi$ maps elements of $\ppointsY$ onto elements of $\ppointsX$.
	    By Lemma \ref{Lm_FieldRedProps}, $\varphi$ is injective.
	    To prove bijectivity, one can prove that $|\ppointsY|=|\ppointsX|$. The fact that
	    $|\ppointsY|=\big|\ag\big(t,q^{s+1}\big)\big|=q^{st+t}$ follows directly.
	    To prove that $|\ppointsX|=q^{st+t}$ as well, one has to count the number of $(t-1)$-spaces in $\pg(s+t,q)$ disjoint to a fixed $s$-space, see e.g.\ \cite[Sect. $170$]{Segre}.
	    
	    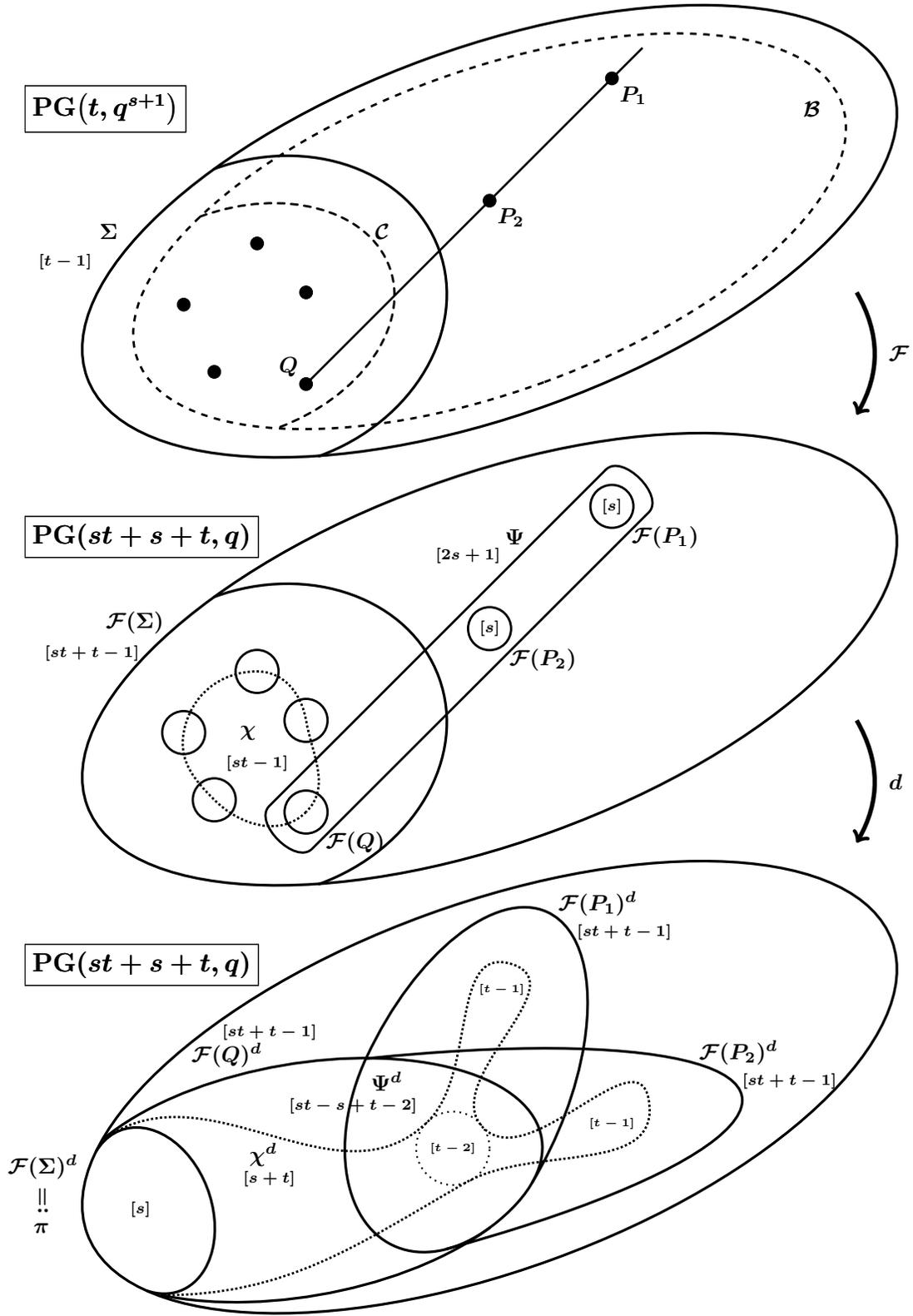
\begin{figure}
		\begin{center}\begin{tikzpicture}[scale=0.97722625]
		    
		    
			\draw[line width=1.2pt, rotate around={290:(0,0)}] (0,0) ellipse (3cm and 7cm);
			\node[draw,fill=none,anchor=west] at (-7.6,2) {\large\textbf{PG}$\boldsymbol{\big(t,q^{s+1}\big)}$};
			\node[draw,fill=none,anchor=west] at (-7.6,2) {\large\textbf{PG}$\boldsymbol{\big(t,q^{s+1}\big)}$}; 
			
			\draw[line width=1.2pt, rotate around={290:(-4.5,1.02)}, line join=round, line cap=round] (-4.5,1.02) arc (180:0:2.5cm and 3cm);
			\node[draw=none,fill=none,anchor=west] at (-6.5,0) {$\boldsymbol{\Sigma}$};
			\node[draw=none,fill=none,anchor=west] at (-7.5,-0.5) {\scriptsize$\boldsymbol{[t-1]}$};
			
			\draw[line width=1.05pt, rotate around={290:(0,0)}, dashed] (0,0) ellipse (2.625cm and 6.125cm);
			\node[draw=none,fill=none,anchor=west] at (5,2) {$\boldsymbol{\Sub}$};
			
			\draw[line width=1.05pt, rotate around={290:(-4.73,0.24)}, dashed, line join=round, line cap=round] (-4.73,0.24) arc (180:0:1.84cm and 2.625cm);
			\node[draw=none,fill=none,anchor=west] at (-2,0) {$\boldsymbol{\HyperSub}$};
			
			\draw[fill=black] (2,2.5) circle (3pt);
			\node[draw=none,fill=none,anchor=north west] at (2,2.5) {$\boldsymbol{P_1}$};
			
			\draw[fill=black] (0,0.5) circle (3pt);
			\node[draw=none,fill=none,anchor=north west] at (0,0.5) {$\boldsymbol{P_2}$};
			
			\draw[fill=black] (-3,-2.5) circle (3pt);
			\node[draw=none,fill=none,anchor=south east] at (-3,-2.5) {$\boldsymbol{Q}$};
			
			\draw[fill=black] (-4.5,-2.3) circle (3pt);
			\draw[fill=black] (-5,-1.2) circle (3pt);
			\draw[fill=black] (-3.8,-0.2) circle (3pt);
			\draw[fill=black] (-3,-1) circle (3pt);
			
			\draw[line width=1pt] (2.5,3) -- (-3,-2.5);
			
			
			\draw[->, line width=2pt] (6,-1) to [bend left] (6,-3);
			\node[draw=none,fill=none,anchor=west] at (6.4,-2) {$\boldsymbol{\FieldRed}$};
			
			
			\draw[line width=1.2pt, rotate around={290:(0,-7)}] (0,-7) ellipse (3cm and 7cm);
			\node[draw,fill=none,anchor=west] at (-7.6,-5) {\large\textbf{PG}$\boldsymbol{(st+s+t,q)}$};
			\node[draw,fill=none,anchor=west] at (-7.6,-5) {\large\textbf{PG}$\boldsymbol{(st+s+t,q)}$}; 
			
			\draw[line width=1.2pt, rotate around={290:(-4.5,-5.98)}, line join=round, line cap=round] (-4.5,-5.98) arc (180:0:2.5cm and 3cm);
			\node[draw=none,fill=none,anchor=west] at (-6.4,-6.4) {$\boldsymbol{\FieldRed(\Sigma)}$};
			\node[draw=none,fill=none,anchor=west] at (-7.4,-6.9) {\scriptsize$\boldsymbol{[st+t-1]}$};
			
			\draw[line width=1.05pt] (2,-4.5) circle (10pt);
			\node[draw=none,fill=none,anchor=north west] at (2.2,-4.7) {$\boldsymbol{\FieldRed(P_1)}$};
			\node[draw=none,fill=none] at (2,-4.5) {\scriptsize$\boldsymbol{[s]}$};
			
			\draw[line width=1.05pt] (0,-6.5) circle (10pt);
			\node[draw=none,fill=none,anchor=north west] at (0.2,-6.7) {$\boldsymbol{\FieldRed(P_2)}$};
			\node[draw=none,fill=none] at (0,-6.5) {\scriptsize$\boldsymbol{[s]}$};
			
			\draw[line width=1.05pt] (-3,-9.5) circle (10pt);
			\node[draw=none,fill=none,anchor=north west] at (-2.8,-9.7) {$\boldsymbol{\FieldRed(Q)}$};
			
			\draw[line width=1.05pt] (-4.5,-9.3) circle (10pt);
			\draw[line width=1.05pt] (-5,-8.2) circle (10pt);
			\draw[line width=1.05pt] (-3.8,-7.2) circle (10pt);
			\draw[line width=1.05pt] (-3,-8) circle (10pt);
			
			\draw[line width=1.05pt] (1.9,-3.9) -- (-3.6,-9.4) to [out=225, in=225](-2.9,-10.1) -- (2.6,-4.6) to [out=45, in=45] cycle;
			\node[draw=none,fill=none,anchor=east] at (0.7,-5) {$\boldsymbol{\Psi}$};
			\node[draw=none,fill=none,anchor=east] at (0.3,-5.3) {\scriptsize$\boldsymbol{[2s+1]}$};
			
			\draw[line width=1.05pt, densely dotted] (-3,-9.5) to [out=225, in=315] (-4.5,-9.3) to [out=135, in=260] (-5,-8.2) to [out=80, in=180] (-3.8,-7.2) to [out=0, in=100] (-3,-8) to [out=280, in=45] cycle;
			\node[draw=none,fill=none] at (-3.95,-8.2) {$\boldsymbol{\chi}$};
			\node[draw=none,fill=none] at (-3.8,-8.7) {\scriptsize$\boldsymbol{[st-1]}$};
			
			
			\draw[->, line width=2pt] (6,-8) to [bend left] (6,-10);
			\node[draw=none,fill=none,anchor=west] at (6.4,-9) {$\boldsymbol{d}$};
			
			
			\draw[line width=1.2pt, rotate around={290:(0,-14)}] (0,-14) ellipse (3cm and 7cm);
			\node[draw,fill=none,anchor=west] at (-7.6,-12) {\large\textbf{PG}$\boldsymbol{(st+s+t,q)}$};
			\node[draw,fill=none,anchor=west] at (-7.6,-12) {\large\textbf{PG}$\boldsymbol{(st+s+t,q)}$}; 
			
			\draw[line width=1.2pt, rotate around={290:(-6.43,-15.05)}, line join=round, line cap=round] (-6.43,-15.05) arc (210:-30:1.4cm and 1cm);
			\node[draw=none,fill=none,anchor=west] at (-8,-15.3) {$\boldsymbol{\FieldRed(\Sigma)^d}$};
			\node[draw=none,fill=none,rotate around={90:(0,0)}] at (-7.3,-15.85) {$\boldsymbol{:=}$};
			\node[draw=none,fill=none,anchor=west] at (-7.58,-16.3) {$\boldsymbol{\pi}$};
			\node[draw=none,fill=none,anchor=west] at (-6,-16) {\scriptsize$\boldsymbol{[s]}$};
			
			\draw[line width=1.2pt, rotate around={155:(0,-16.3)}, line join=round, line cap=round] (0,-16.3) arc (140:0:1.7cm and 2.6cm);
			\node[draw=none,fill=none,anchor=east] at (-1.3,-13.9) {$\boldsymbol{\Psi^d}$};
			\node[draw=none,fill=none,anchor=east] at (-1.06,-14.3) {\scriptsize$\boldsymbol{[st-s+t-2]}$};
			
			\draw[line width=1.2pt, rotate around={335:(-2.01,-13.51)}, line join=round, line cap=round] (-2.01,-13.51) arc (180:-12:1.678cm and 3.4cm);
			\node[draw=none,fill=none,anchor=west] at (1,-11) {$\boldsymbol{\FieldRed(P_1)^d}$};
			\node[draw=none,fill=none] at (2.2,-11.46) {\scriptsize$\boldsymbol{[st+t-1]}$};
			
			\draw[line width=1.2pt, rotate around={280:(-2,-13.53)}, line join=round, line cap=round] (-2,-13.53) arc (160:25:1.785cm and 9cm);
			\node[draw=none,fill=none,anchor=west] at (3.3,-13.44) {$\boldsymbol{\FieldRed(P_2)^d}$};
			\node[draw=none,fill=none] at (4.9,-13.9) {\scriptsize$\boldsymbol{[st+t-1]}$};
			
			\draw[line width=1.2pt, rotate around={280:(-6.32,-14.85)}] (-6.32,-14.85) arc (230:-40:1.9cm and 4cm);
			\node[draw=none,fill=none,anchor=north west] at (-5,-13.15) {$\boldsymbol{\FieldRed(Q)^d}$};
			\node[draw=none,fill=none] at (-3.6,-13.1) {\scriptsize$\boldsymbol{[st+t-1]}$};
			
			\draw[line width=1.05pt, densely dotted, line join=round] (-6.278,-14.833) to [out=35, in=225] (-1,-14.553) to [out=45, in=160] (0.4,-12) to [out=340, in=135] (-0.1,-14.668) to [out=315, in=100] (2.6,-14.2) to [out=280, in=30] (-0.4,-15.566) to [out=210, in=0] (-5.38,-17.404);
			\draw[line width=0.8pt, dotted] (-0.6,-15) circle (0.6cm);
			\node[draw=none,fill=none] at (-3.7,-15.1) {$\boldsymbol{\chi^d}$};
			\node[draw=none,fill=none] at (-3.6,-15.5) {\scriptsize$\boldsymbol{[s+t]}$};
			
			\node[draw=none,fill=none] at (0.2,-12.4) {\tiny$\boldsymbol{[t-1]}$};
			\node[draw=none,fill=none] at (2,-14.6) {\tiny$\boldsymbol{[t-1]}$};
			\node[draw=none,fill=none] at (-0.6,-15) {\tiny$\boldsymbol{[t-2]}$};
		\end{tikzpicture}\end{center}
		\caption{Visualisation of the map $\varphi$, see Definition \ref{Def_IsomorphismPhi}.}\label{Fig_PointLineIsomorphism}
		\end{figure}
	    
	    \bigskip
	    \underline{Claim $2$}: $\varphi$ maps points contained in a fixed element of $\llinesY$ onto points contained in a fixed element of $\llinesX$.
	    
	    \bigskip
	    At this point, Figure \ref{Fig_PointLineIsomorphism} comes in handy to visualise the following arguments.
	    
	    Let $\Sub\setminus\HyperSub\in\llinesY$ be arbitrary and let $P_1,P_2\in\Sub\setminus\HyperSub$ be two distinct points.
	    Define $Q:=P_1P_2\cap\HyperSub$.
	    By Lemma \ref{Lm_FieldRedProps}, $\FieldRed(P_1)$ and $\FieldRed(P_2)$ are disjoint $s$-spaces, each of which are disjoint to the $(st+t-1)$-space $\FieldRed(\Sigma)\supseteq\FieldRed(Q)$.
	    Therefore, they span a $(2s+1)$-space $\Psi:=\vspan{\FieldRed(P_1),\FieldRed(P_2)}$; note that any two distinct elements of the set $\{\FieldRed(P_1),\FieldRed(P_2),\FieldRed(Q)\}$ span $\Psi$.
	    Dualising these observations, we get that $\FieldRed(P_1)^d$, $\FieldRed(P_2)^d$ and $\FieldRed(Q)^d$ are $(st+t-1)$-spaces intersecting each other in the $(st-s+t-2)$-space $\Psi^d$, where $\FieldRed(P_1)^d$ and $\FieldRed(P_2)^d$ are disjoint to $\pi=\FieldRed(\Sigma)^d$; note that any two distinct elements of the set $\{\FieldRed(P_1)^d,\FieldRed(P_2)^d,\FieldRed(Q)^d\}$ intersect each other exactly in $\Psi^d$.
	    Moreover, the fact that $Q\in\Sigma$ means that $\FieldRed(Q)^d$ is an $(st+t-1)$-space going through $\pi$.
	    
	    As $\chi$ intersects $\FieldRed(Q)$ exactly in an $(s-1)$-space, $\chi^d$ intersects $\FieldRed(Q)^d$ in an $(s+t-1)$-space $\varphi(Q)=\FieldRed(Q)^d\cap\chi^d$.
	    Moreover, as both $\chi^d$ and $\FieldRed(Q)^d$ are spaces through $\pi$, their intersection $\varphi(Q)$ contains $\pi$ as well.
	    The space $\Psi^d$ is disjoint to $\pi$, so $\FieldRed(Q)^d$ is spanned by $\pi$ and $\Psi^d$; by Grassmann's identity, the $(s+t-1)$-space $\varphi(Q)=\FieldRed(Q)^d\cap\chi^d$ intersects $\Psi^d$ in a $(t-2)$-space.
	    This means that $\chi^d$ intersects both $\FieldRed(P_1)^d$ and $\FieldRed(P_2)^d$ in a $(t-2)$-space, which implies that $\varphi(P_1)$ intersects $\varphi(P_2)$ in a $(t-2)$-space.
	    
	    As $P_1$ and $P_2$ were arbitrarily chosen points of $\Sub\setminus\HyperSub$, we conclude that for any two points of the latter point set, their images under $\varphi$ intersect each other maximally.
	    Hence, this set of images $\varphi(\Sub\setminus\HyperSub)$ forms an Erd\H os-Ko-Rado set, which means that
	    \begin{enumerate}
	        \item either all elements of $\varphi(\Sub\setminus\HyperSub)$ lie in a $t$-space, or
	        \item all elements of $\varphi(\Sub\setminus\HyperSub)$ have a fixed $(t-2)$-space in common.
	    \end{enumerate}
	    If $1.$ generally holds, the proof of the claim is done, as this $t$-space is contained in $\chi^d$ and contains (at least) a $(t-1)$-space (of $\varphi(\Sub\setminus\HyperSub)$) disjoint to $\pi$.
	    Hence, by Grassmann's identity, this $t$-space intersects $\pi$ exactly in one point.
	    
	    Suppose that $2.$ holds.
	    Note that the points of the set $\Sub\setminus\HyperSub$ span the whole space $\pg\big(t,q^{s+1}\big)$.
	    This means that, by Lemma \ref{Lm_FieldRedProps}, all elements of $\FieldRed(\Sub\setminus\HyperSub)$ span the whole space $\pg(st+s+t,q)$.
	    However, as $2.$ holds, the intersection of all elements of $\varphi(\Sub\setminus\HyperSub)$ has dimension at least $t-2$, hence the intersection of all elements of $\FieldRed(\Sub\setminus\HyperSub)^d$ has dimension at least $t-2$ as well.
	    Dualising this statement, we obtain that the span of all elements of $\FieldRed(\Sub\setminus\HyperSub)$ has dimension at most $st+s+t-(t-2)-1=st+s+1$.
	    This is only possible if $st+s+t\leq st+s+1\Leftrightarrow t\leq1\Leftrightarrow t=1$.
	    
	    Hence, this implies that $t=1$.
	    Then $\chi^d$ is an $(s+1)$-space of $\pg(2s+1,q)$ through the $s$-space $\pi$, intersecting each element of $\varphi(\Sub\setminus\HyperSub)$ exactly in a point.
	    Denote the set of points in $\Sub$ by $\mathcal{P}_\Sub$.
	    As $\Sub\cong\pg(1,q)$, by Lemma \ref{Lm_SublineIsRegulus}, $\FieldRed(\mathcal{P}_\Sub)$ is a regulus of $\pg(2s+1,q)$.
	    Let $Q_1,Q_2\in\Sub\setminus\HyperSub$ and define $Q_1':=\varphi(Q_1)$ and $Q_2':=\varphi(Q_2)$.
	    Then $Q_1'Q_2'$ lies in the $(s+1)$-space $\chi^d$ and hence intersects the $s$-space $\pi=\FieldRed(Q)$ (here, $Q=\HyperSub$).
	    In this way, we see that $Q_1'Q_2'$ meets at least three elements of the regulus $\FieldRed(\mathcal{P}_\Sub)$ (namely $\FieldRed(Q_1)$, $\FieldRed(Q_2)$ and $\FieldRed(Q)$), hence $Q_1'Q_2'$ has to intersect all elements of that regulus.
	    As $Q_1'Q_2'$ is contained in $\chi^d$ and as each element of $\FieldRed(\Sub\setminus\HyperSub)$ intersects $\chi^d$ exactly in a point, all these intersection points have to lie on $Q_1'Q_2'$ and hence the proof is done.
	\end{proof}
	
	\bigskip
    \textbf{Acknowledgements.}
    The author would like to express a lot of gratitude towards Stefaan De Winter for his short and elegant idea on how to construct the isomorphism stated in Definition \ref{Def_IsomorphismPhi}.
    
    \bigskip
    \textbf{Conflict of interest.}
    On behalf of all authors, the corresponding author states that there is no conflict of interest.
    
    \bigskip
    \textbf{Data availability.}
    Data sharing not applicable to this article as no datasets were generated or analysed during the current study.
	
	\bibliographystyle{plain}
    \bibliography{main}

\begin{thebibliography}{10}

\bibitem{AhrensSzekeres}
R.~W. Ahrens and G.~Szekeres.
\newblock On a combinatorial generalization of $27$ lines associated with a
  cubic surface.
\newblock {\em J. Austral. Math. Soc.}, 10:485--492, 1969.

\bibitem{BaderLunardon}
L.~Bader and G.~Lunardon.
\newblock Desarguesian spreads.
\newblock {\em Ric. Mat.}, 60(1):15--37, 2011.

\bibitem{BicharaMazzoccaSomma}
A.~Bichara, F.~Mazzocca, and C.~Somma.
\newblock On the classification of generalized quadrangles in a finite affine
  space {${\rm AG}(3,\,2^{h})$}.
\newblock {\em Boll. Un. Mat. Ital. B (5)}, 17(1):298--307, 1980.

\bibitem{CaraRotteyVandeVoorde}
Ph. Cara, S.~Rottey, and G.~Van~de Voorde.
\newblock The isomorphism problem for linear representations and their graphs.
\newblock {\em Adv. Geom.}, 14(2):353--367, 2014.

\bibitem{DeClerck}
F.~De~Clerck.
\newblock Een kombinatorische studie van de eindige parti\"ele meetkunden.
\newblock {\em PhD Thesis, Ghent University}, 1978.

\bibitem{DeWinter}
S.~De~Winter.
\newblock Non-isomorphic semipartial geometries.
\newblock {\em Des. Codes Cryptogr.}, 47(1-3):3--9, 2008.

\bibitem{DeWinterRotteyVandeVoorde}
S.~De~Winter, S.~Rottey, and G.~Van~de Voorde.
\newblock Linear representations of subgeometries.
\newblock {\em Des. Codes Cryptogr.}, 77(1):203--215, 2015.

\bibitem{Debroey}
I.~Debroey.
\newblock Semi-parti\"ele meetkunden.
\newblock {\em PhD Thesis, Ghent University}, 1978.

\bibitem{Denaux}
L.~Denaux.
\newblock Constructing saturating sets in projective spaces using
  subgeometries.
\newblock {\em Des. Codes Cryptogr.}, 2021.

\bibitem{GrundhoferJoswigStroppel}
T.~Grundh\"{o}fer, M.~Joswig, and M.~Stroppel.
\newblock Slanted symplectic quadrangles.
\newblock {\em Geom. Dedicata}, 49(2):143--154, 1994.

\bibitem{Hall}
M.~Hall, Jr.
\newblock Affine generalized quadrilaterals.
\newblock In {\em Studies in {P}ure {M}athematics ({P}resented to {R}ichard
  {R}ado)}, pages 113--116. Academic Press, London, 1971.

\bibitem{LavrauwVandeVoorde}
M.~Lavrauw and G.~Van~de Voorde.
\newblock Field reduction and linear sets in finite geometry.
\newblock In {\em Topics in finite fields}, volume 632 of {\em Contemp. Math.},
  pages 271--293. Amer. Math. Soc., Providence, RI, 2015.

\bibitem{Segre}
B.~Segre.
\newblock {\em Lectures on modern geometry}, volume~7 of {\em Consiglio
  Nazionale delle Ricerche Monografie Matematiche}.
\newblock Edizioni Cremonese, Rome, 1961.
\newblock With an appendix by Lucio Lombardo-Radice.

\end{thebibliography}

    \bigskip
    Author's address:
    
    \bigskip
    Lins Denaux
    
    Ghent University
    
    Department of Mathematics: Analysis, Logic and Discrete Mathematics
    
    Krijgslaan $281$ -- Building S$8$
    
    $9000$ Ghent
    
    BELGIUM
    
    \texttt{e-mail : lins.denaux@ugent.be}
    
    \texttt{website: }\url{https://users.ugent.be/~ldnaux}
    
\end{document}